\documentclass[11pt]{amsart}
\usepackage[margin=1in]{geometry}
\usepackage{amsmath}
\usepackage{amssymb}
\usepackage{amsthm}
\usepackage{mathabx}
\usepackage{graphicx}
\usepackage{algorithm}
\usepackage{algorithmic}
\usepackage{mathtools}
\usepackage{float}
\usepackage[table]{xcolor}
\usepackage[bottom]{footmisc}

\theoremstyle{plain}
\newtheorem{theorem}{Theorem}[section]
\newtheorem{lemma}[theorem]{Lemma}
\newtheorem{corollary}[theorem]{Corollary}
\newtheorem{proposition}[theorem]{Proposition}

\theoremstyle{definition}
\newtheorem{definition}[theorem]{Definition}
\newtheorem{example}[theorem]{Example}
\newtheorem{conjecture}[theorem]{Conjecture}

\newtheorem{question}[theorem]{Question}
\newtheorem{remark}[theorem]{Remark}

\numberwithin{equation}{section}

\newcommand{\calA}{\mathcal{A}} 
\newcommand{\calB}{\mathcal{B}} 
\newcommand{\calE}{\mathcal{E}} 
\newcommand{\calH}{\mathcal{H}} 
\newcommand{\calM}{\mathcal{M}}
\newcommand{\calN}{\mathcal{N}}
\newcommand{\calO}{\mathcal{O}}
\newcommand{\calR}{\mathcal{R}} 
\newcommand{\calT}{\mathcal{T}}
\newcommand{\bbP}{\mathbb{P}}
\newcommand{\bbR}{\mathbb{R}}
\newcommand{\bbS}{\mathbb{S}}
\newcommand{\bbZ}{\mathbb{Z}}

\newcommand{\bfa}{\mathbf{a}}
\newcommand{\bfb}{\mathbf{b}}
\newcommand{\bfc}{\mathbf{c}}
\newcommand{\bfm}{\mathbf{m}}
\newcommand{\bfp}{\mathbf{p}}
\newcommand{\bfr}{\mathbf{r}}
\newcommand{\bfv}{\mathbf{v}}
\newcommand{\bfz}{\mathbf{z}}
\newcommand{\bfM}{\mathbf{M}}
\newcommand{\row}{\rightarrow}
\renewcommand{\phi}{\varphi}
\renewcommand{\hat}{\widehat}
\DeclareMathOperator{\lcm}{lcm}

\title[A billiards-like dynamical system for attacking chess pieces]{A billiards-like dynamical system \\ for attacking chess pieces}

\author{Christopher R.\ H.\ Hanusa}
\address{Department of Mathematics \\ Queens College (CUNY) \\ 65-30 Kissena Blvd. \\ Queens, NY 11367-1597, U.S.A.}
\email{\tt chanusa@qc.cuny.edu}

\author{Arvind V. Mahankali}
\address{Carnegie Mellon University, 5032 Forbes Avenue, SMC 6925, Pittsburgh, PA 15289}
\email{\tt amahanka@andrew.cmu.edu}

\begin{document}

\begin{abstract}
We apply a one-dimensional discrete dynamical system originally considered by Arnol'd reminiscent of mathematical billiards to the study of two-move riders, a type of fairy chess piece.  In this model, particles travel through a bounded convex region along line segments of one of two fixed slopes.  

We apply this dynamical system to characterize the vertices of the inside-out polytope arising from counting placements of nonattacking chess pieces and also to give a bound for the period of the counting quasipolynomial.  The analysis focuses on points of the region that are on trajectories that contain a corner or on cycles of full rank, or are crossing points thereof.  

As a consequence, we give a simple proof that the period of the bishops' counting quasipolynomial is 2, and provide formulas bounding periods of counting quasipolynomials for many two-move riders including all partial nightriders.  We draw parallels  
to the theory of mathematical billiards and pose many new open questions.
\end{abstract}

\subjclass[2010]{Primary 05A15, 37C83, 37E15; Secondary 00A08, 52C05, 52C35.}


\keywords{Nonattacking, chess pieces, fairy chess, riders, Ehrhart theory, inside-out polytope, quasipolynomial, trajectories, discrete dynamical system, billiards, Poincar\'e map}

\maketitle


\section{Introduction} 

The classic $n$-Queens Problem asks in how many ways $n$ nonattacking queens can be placed on an $n\times n$ chessboard.  In a series of six papers \cite{qqs1,qqs2,qqs3,qqs4,qqs5,qqs6}, Chaiken, Hanusa, and Zaslavsky develop a geometric approach involving lattice point counting to answer a generalization when the board is made up of integer lattice points on the interior of an $n$-dilation of a convex polygon $\calB$, pieces $\bbP$ are riders (which means they can travel arbitrarily far in a move's direction like a queen, bishop, or the fairy nightrider), and the number of pieces $q$ is decoupled from the size of the board.  Their main structural result (Theorem~4.1 of \cite{qqs1}) is that the number of nonattacking configurations of $q$ $\bbP$-pieces on the $(n+1)$-dilation of $\calB^\circ$ is always a quasipolynomial in $n$ of degree $2q$.

In this paper we investigate the period of this counting quasipolynomial when the pieces have exactly two moves, on any board and for any number of pieces.  (Pieces with only one move are completely understood while pieces with three or more moves are much more complex, as discussed in \cite{qqs4}.) We learn that this period is determined by the behavior of an extension of a one-dimensional discrete dynamical system originally considered by Arnol'd (described in \cite{2006}).  This discrete dynamical system is similar to that of mathematical billiard theory in that particles travel across a region along line segments and ``bounce'' when they hit the region's boundary. However, instead of obeying the law of reflection, the line segments have one of two slopes determined by the moves of the fairy chess piece. Compare the diagrams in Figure~\ref{fig:comparison}.  

\begin{figure}[htp]
\centering
\raisebox{.05in}{\includegraphics[width=5.5cm]{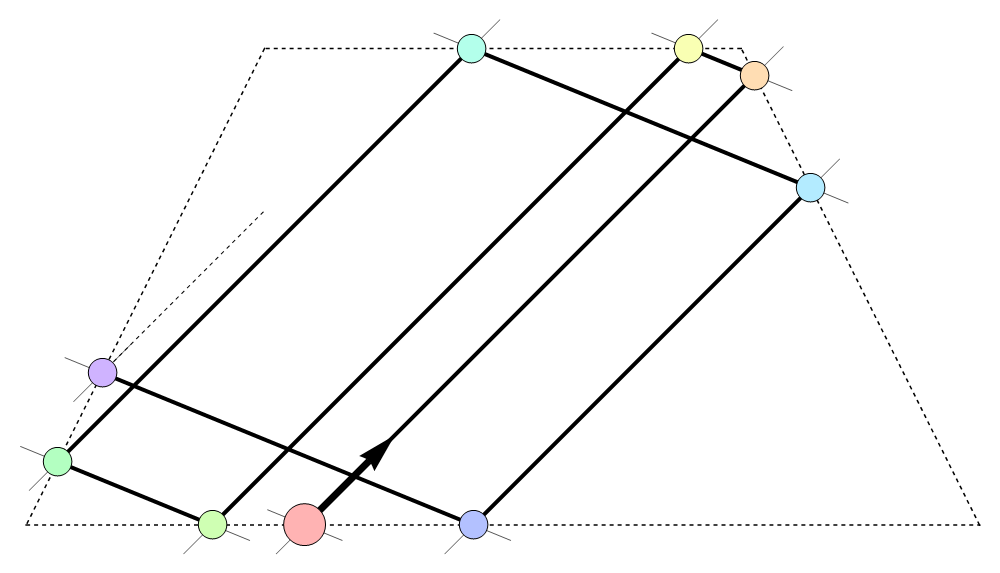}}
\includegraphics[width=5.7cm]{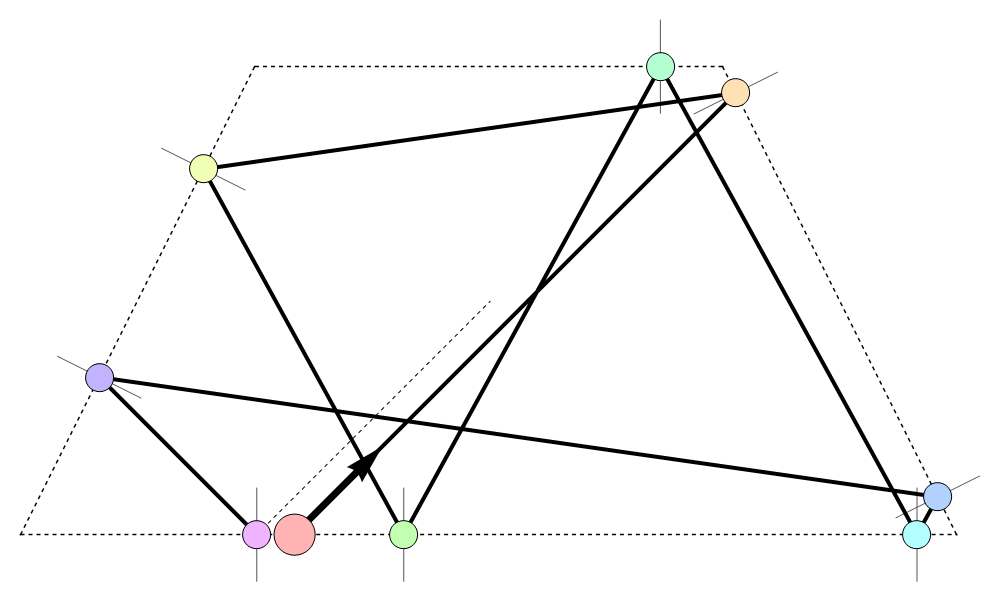}
\caption{A comparison of the behavior of two discrete dynamical systems in a convex region.  On the left is the discrete dynamical system where the particle bounces off a wall in directions that alternate between the moves of a fairy chess piece.
On the right is the classical discrete dynamical system from mathematical billiards in which the particle bounces off a wall by obeying the law of reflection.}
\label{fig:comparison}
\end{figure}

The study of mathematical billiards has been a fruitful area of research for over a hundred years; some early papers were written by Artin \cite{Artin} and Birkhoff \cite{Birkhoff}.  The work of Sina\u{i} \cite{Sinai} stimulated interest in the ergodic theory and chaos of billiards, and the connections to geometry, statistical physics, and Teichm\"uller theory give billiards a wide appeal.  We recommend the surveys by Tabachnikov, Masur, and Gutkin \cite{tabachnikov, chapter13, gutkin1, gutkin2}.  
		
The theory of the dynamical system studied in this article originally appears in \cite{GKT} and \cite{KT}, where it has been developed for ovals (smooth convex closed curves) to study geodesics on Lorentz surfaces.  It also appears when considering light-like trajectories within ellipses in the Minkowski plane \cite{DR}. We discuss this dynamical system on all bounded convex regions, in turn developing theory that we apply to convex polygons.  This leads to a number of open questions motivated by our study and by the billiards literature. For example, the particle flows can be periodic, can converge to a limit set, or exhibit ergodicity, and it is not clear when each property occurs. (See Sections~\ref{sec:periodic} and \ref{sec:properties}.) 

Counting lattice points in polytopes is the subject of a field of mathematics named Ehrhart theory after the work of Eug\`ene Ehrhart \cite{Ehrhart}. Ehrhart theory has found applications in integer programming, number theory, and algebra, among others \cite{DeLoera, BBKSZ, flow}; for more background, see the accessible works by De Loera \cite{DeLoera} and Beck and Robins \cite{Beck}. Beck and Zaslavsky \cite{IOP} count lattice points in a polytope that avoid an arrangement of hyperplanes.  Such a construction is called an {\em inside-out polytope}; it is under this framework that the $q$-Queens Problem was converted into a counting question.  Ehrhart theory tells us that the period of the counting quasipolynomial always divides the denominator of the inside-out polytope---the least common multiple of the denominators of its vertices.

Theorem~\ref{thm:vertices} characterizes the vertices of the inside-out polytope for two-move riders as points on flows (trajectories) in the dynamical system.   Vertices either involve trajectory segments that include corners of the board or cyclic trajectory segments whose system of defining equations is linearly independent (rigid cycles) or interior crossing points of these trajectory segments.  This characterization allows us to prove a formula for the denominator of the counting quasipolynomial for the number of nonattacking chess piece configurations in Theorem~\ref{thm:denominator}.  From a dynamical systems point of view, this theorem is interesting because of the necessity of explicitly calculating the crossing points of flows, which are rarely considered in a dynamical systems context. (We are aware only of Don's \cite{Don}.) Investigating crossing points in the context of billiard theory may lead to further insights in that field.  

When we analyzed the trajectories to calculate bounds on periods of the counting quasipolynomials we saw some striking behavior.  Section~\ref{sec:case1} highlights a case where there are no rigid cycles and the corner trajectories are well behaved. Section~\ref{sec:oppsigns} discusses a case where there is one rigid cycle that serves as an attractor to all other trajectories. In Section~\ref{sec:case3}, the dynamical system reduces to that of billiards. In Section~\ref{sec:periodic} we show an example where the trajectories appear to behave ergodically.  These insights allow us to provide insight into a question of periodicity discussed by Khmelev \cite{khmelev_2005}.  We are able to show that the measure of the set of pairs of slopes that lead to a periodic orbit or a fixed point in a convex polygon is strictly positive.  We show that it is of full measure for a triangle and conjecture that it is not of full measure for any other convex polygon.

One of the motivations of this work was to better understand nightriders, riders that move like the knight along slopes of $\pm2$ and $\pm\frac{1}{2}$, whose behavior was investigated in \cite{qqs5}.  The authors suggested that partial nightriders---two-move riders with a subset of the nightrider's moves---would be fruitful pieces to investigate.  Indeed, in Section~\ref{sec:applications} we are able to determine denominators (and therefore bounds on the period of the counting quasipolynomial) of all two-move partial nightriders.  Our work also gives a simple new proof that the period of the counting quasipolynomial for $q\geq3$ bishops is 2, avoiding the need to use signed graph theory which was present in the original proof given in \cite{qqs6}.

We now share a brief summary of our paper.   We recall the necessary background information from the theory of chess piece configurations in Section~\ref{sec:background} and we explore hyperplanes and rank in Section~\ref{sec:hyperplanes}.  Section~\ref{sec:dynamical} defines the discrete dynamical system and concepts related to trajectories.  In Section~\ref{sec:proofs}, we apply the dynamical system to polygonal boards which allows us to characterize vertices of the inside-out polytope in Theorem~\ref{thm:vertices} and prove the formula for its denominator in Theorem~\ref{thm:denominator}.  We then restrict to the square board to find explicit formulas for the coordinates of points on trajectories and crossing points in Section~\ref{sec:applications}, culminating with the discussion of periodic trajectories in Section~\ref{sec:periodic}. We conclude with a wide variety of open problems in Section~\ref{sec:open}, asking questions about future regions of study, properties of trajectories, and generalizations of the dynamical system, among others.

\section{Background}\label{sec:background}
We gather here the necessary Ehrhart and nonattacking chess piece theory background information and notation from \cite{qqs1,qqs2,qqs4}.  Every $q$-Queens Problem involves three parameters, a board $\calB$, a piece $\bbP$, and a positive integral number of pieces~$q$.  

Our {\em board} $\calB$ is a convex polygon whose corners have rational coordinates; we use the notation $\calB^\circ$ and $\partial\calB$ for its interior and boundary, respectively.  (This is not to be confused with rational polygons, defined in billiard theory whose angles are rational multiples of $\pi$.)  These boards are dilated by an integer factor of $(n+1)$; pieces are placed on integer lattice points in $(n+1)\calB^\circ\cap\bbZ^2$. The {\em square board} refers to $\calB=[0,1]^2$. 

A {\em piece} $\bbP$ has a set $\bfM$ of non-parallel {\em basic moves} $\bfm=(c,d)$ where $c$ and $d$ are relatively prime integers; a piece at position $(x,y)$ may move to any position $(x,y)+k\bfm$ for $k\in\bbZ$ and $\bfm\in\bfM$.  (This ability to move arbitrarily far along a basic move is the defining property of a {\em rider}.)  For example, the {\em bishop} is the piece with basic moves $(1,1)$ and $(-1,1)$, while the fairy chess {\em nightrider} is the rider with the basic moves $(1,\pm2)$ and $(2,\pm1)$ of the knight.  

In this article we consider pieces that are {\em two-move riders} with basic moves $\bfm_1=(c_1, d_1)$ and $\bfm_2=(c_2, d_2)$.  Three pieces that were proposed in \cite{qqs5} and which motivated our study are the {\em partial nightriders}: the {\em lateral nightrider} moves along lines of slope $\pm1/2$, the {\em inclined nightrider} moves along lines of slope $1/2$ and $2$, and the {\em orthonightrider} moves along lines of slope $1/2$ and $-2$.

Two pieces are said to {\em attack} if their positions differ by a multiple of a move. A {\em configuration} of $q$ pieces corresponds to an integral point $\bfz=(\bfz_1,\ldots,\bfz_q)\in\left((n+1)\calB\right)^q\subseteq\bbR^{2q}$ and is said to be {\em nonattacking} if no two pieces are attacking.  Mathematically, a configuration is nonattacking if it avoids the {\em hyperplane arrangement $\calA_\bbP^q$} consisting of all {\em attack equations of type $r$},
\begin{equation}\label{eq:attackeq}
    (\bfz_i-\bfz_j)\cdot (d_r,-c_r)=0,
\end{equation}
for $1\leq i <  j\leq q$ and $r=1,2$; we adopt the shorthand notation $\bfz_i \sim_r \bfz_j$ for Equation~\eqref{eq:attackeq}. Note that $\sim_r$ is an equivalence relation.  

This construction from \cite{qqs1} converts the question of counting the number of nonattacking configurations of $q$ $\bbP$-pieces on $(n+1)\calB^\circ$, denoted $u_\bbP(q;n)$, into a lattice point counting question in this {\em inside-out polytope}, denoted $(\calB^q,\calA_\bbP^q)$.   The boundary equations of $\calB$ are avoided as well, which justifies counting configurations in $((n+1)\calB)^q\cap\bbZ^{2q}$ instead of $((n+1)\calB^\circ)^q\cap\bbZ^{2q}$.

A {\em vertex} of $(\calB^q,\calA_\bbP^q)$ is any point of $\calB^q$ that is the intersection of attack equations from $\calA_\bbP^q$ and {\em fixation equations} (or simply {\em fixations}) of the form 
\begin{equation}
    (\alpha_1,\alpha_2)\cdot \bfz_i=\beta,
\end{equation}
where $\alpha_1 x+\alpha_2 y = \beta$ is the equation of a side of $\mathcal{B}$.  The {\em denominator} $\Delta(\bfz)$ of a vertex $\bfz$ is the least common multiple of the denominators of its coordinates, and the {\em denominator} $D(\calB^q,\calA_\bbP^q)$ of an inside-out polytope is the least common multiple of the denominators of all its vertices. In Theorem~\ref{thm:vertices} we determine the structure of all vertices of the inside-out polytope for an arbitrary board $\calB$ and a two-move rider $\bbP$. 

As with many counting questions in Ehrhart Theory, the main structural result of \cite{qqs1} is that $u_\bbP(q;n)$ is always a quasipolynomial in $n$ of degree $2q$.  That is, for each fixed $q$, $u_\bbP(q;n)$ is given by a cyclically repeating sequence of polynomials in $n$ and its {\em period} $p$ is the shortest length of such a cycle.  The period of the counting quasipolynomial $u_\bbP(q;n)$ always divides the denominator $D(\calB^q,\calA_\bbP^q)$ \cite[Theorem~3.23]{Beck}.   In Ehrhart Theory the period is often difficult to obtain and can be much smaller than this denominator, but in chess counting problems the period and denominator always seem to agree which leads to the following conjecture.

\begin{conjecture}[{{\cite[Conjecture~8.6]{qqs2}}}]
\label{conj:period}
The period of the counting quasipolynomial $u_\bbP(q;n)$ equals the denominator $D([0,1]^{2q},\calA_\bbP^q)$.
\end{conjecture}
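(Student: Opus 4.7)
The standard Ehrhart theory of inside-out polytopes \cite[Theorem~3.23]{Beck} already gives $p \mid D$, so the goal is to prove the reverse divisibility $D \mid p$. My plan is to work one prime at a time: fix a prime $\ell$ and let $\ell^k$ be the exact power of $\ell$ dividing $D$, and aim to show that $\ell^k$ must divide $p$. Contrapositively, if $u_\bbP(q;n)$ agreed on all residues modulo some proper divisor of $D$, then some vertex whose denominator is divisible by $\ell^k$ would fail to ``register'' in the quasipolynomial, which I would try to rule out by an explicit local analysis.

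To build the obstruction I would combine Theorem~\ref{thm:vertices} with a vertex-wise decomposition of the Ehrhart quasipolynomial. By the characterization of vertices (and the denominator formula in Theorem~\ref{thm:denominator}), there is a vertex $\bfz^*$ of $([0,1]^{2q}, \calA_\bbP^q)$ with $\ell^k \mid \Delta(\bfz^*)$, supported either on a corner trajectory, on a rigid cycle, or at a crossing point thereof. Writing $u_\bbP(q;n)$ as a signed sum indexed by flats of $\calA_\bbP^q$ of lattice-point counts in the corresponding faces of $[0,1]^{2q}$, each face is itself a rational polytope whose vertices contribute local generating functions via Brion's theorem; the contribution at $\bfz^*$ has intrinsic period $\Delta(\bfz^*)$. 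The idea is to show that this local contribution survives summation, yielding a nontrivial discrepancy at residues modulo $\ell^k$.

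The main obstacle is cancellation: in a generic setting, contributions from different vertices can conspire to shrink the apparent period of the quasipolynomial, producing the well-known phenomenon of period collapse. My hope is that the rigidity built into the new dynamical system rules this out for two-move riders. Rigid cycles are defined by a full-rank system of attack equations, and the crossing points of trajectories come from independent slope pairs, so the local cone at $\bfz^*$ has a distinguished primitive direction whose $\ell$-adic structure is detected by the leading term of the local Ehrhart expansion. Ruling out cancellation between multiple vertices sharing the same $\ell^k$ is the hardest piece; I would attempt this via an inclusion-exclusion on the intersection lattice of $\calA_\bbP^q$, leveraging the explicit crossing-point coordinates on the square board from Section~\ref{sec:applications} to show that the $\ell$-adic leading contributions remain linearly independent. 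I expect a full proof would require either a positivity-type result for local Ehrhart coefficients in the chess setting, or a case analysis driven by the trajectory classifications in Sections~\ref{sec:case1}--\ref{sec:properties}, and this non-cancellation is where I anticipate the chief technical difficulty to lie.
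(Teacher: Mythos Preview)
The paper does not prove this statement: it is stated as Conjecture~\ref{conj:period}, attributed to \cite{qqs2}, and is explicitly left open (see the final subsection of Section~\ref{sec:open}). There is therefore no proof in the paper to compare your proposal against.

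As for the proposal itself, there is a genuine gap, and you have correctly identified where it lies. Everything before the paragraph beginning ``The main obstacle is cancellation'' is standard setup; the entire content of the conjecture is precisely the assertion that period collapse does not occur for these inside-out polytopes, and your proposal does not actually establish that. The suggestion that the local cone at $\bfz^*$ has ``a distinguished primitive direction whose $\ell$-adic structure is detected by the leading term'' is not made precise, and the hope that ``$\ell$-adic leading contributions remain linearly independent'' across vertices is asserted rather than argued. Period collapse is a global phenomenon arising from cancellation among many vertex cones in Brion's decomposition, and nothing in the trajectory description of vertices (Theorem~\ref{thm:vertices}) or the explicit coordinates in Section~\ref{sec:applications} obviously prevents such cancellation; indeed, the same vertex data would be available in any inside-out polytope, including ones where collapse does occur.

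There is also a scope mismatch: the conjecture is stated for arbitrary riders $\bbP$, but your argument invokes Theorem~\ref{thm:vertices} and Theorem~\ref{thm:denominator}, which apply only to two-move riders. Even granting that restriction, the cases in Sections~\ref{sec:case1}--\ref{sec:case3} do not exhaust all two-move riders on the square board (see Section~\ref{sec:properties}), so a case analysis along those lines would be incomplete. In short, your outline is a reasonable description of what one would \emph{want} to do, but the decisive step---ruling out cancellation---remains entirely open, which is why the statement is a conjecture.
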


\section{Hyperplanes and Rank.}\label{sec:hyperplanes}

We define the following concepts related to the geometry of the inside-out polytope.

\begin{definition}
For $\bfz = (\bfz_1, \bfz_2, \ldots, \bfz_k) \in \calB^{q}$ we define $\calH(\bfz)$, the \emph{hyperplane arrangement associated to $\bfz$}, to be the set of all attack equations and fixations on which $\bfz$ lies.
\end{definition} 

In other words, $\calH(\bfz)$ will include the attack equation $\bfz_i\sim_r \bfz_j$ if pieces $i$ and $j$ attack and will include the fixation $(\alpha_1,\alpha_2)\cdot\bfz_i = \beta$ if and only if $\bfz_i$ lies on the edge of $\calB$ defined by $\alpha_1 x +\alpha_2 y = \beta$. 

The rank of hyperplane arrangements, equations, and sets of points will help determine when $\bfz \in \calB^q$ is a vertex of $(\calB^q, \calA_\bbP^q)$.

\begin{definition}
The \emph{rank} of a hyperplane arrangement $\calH$ in $\bbR^d$ is the rank of the system of equations given by its hyperplanes. $\calH$ has \emph{full rank} if it has rank $d$.
We say the \emph{rank} of a point $\bfz \in \bbR^{2q}$ is the rank of $\calH(\bfz)$, and $\bfz$ has \emph{full rank} if $\calH(\bfz)$ has full rank.
We say the \emph{rank} of a set $S = \{\bfz_1, \ldots, \bfz_k\} \subseteq \bbR^2$ is the rank of the point $\bfz = (\bfz_1, \ldots, \bfz_k)$, and $S$ has \emph{full rank} if $\bfz$ has full rank.
\end{definition}

\begin{definition}
A set $\calH$ of hyperplanes in $\bbR^d$ is said to be \emph{linearly independent} if the rank of $\calH$ is equal to its size, or equivalently, if the set of normal vectors to these hyperplanes is linearly independent.
\end{definition}

\begin{lemma}\label{lem:fullrankvertex}
$\bfz \in \calB^{q}$ has full rank if and only if $\bfz$ is a vertex of $(\calB^q, \calA_{\bbP}^q)$.
\end{lemma}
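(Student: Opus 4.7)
The plan is to unwind the definitions on both sides and reduce the statement to a standard linear-algebra fact: an intersection of affine hyperplanes in $\bbR^{2q}$ consists of a single point precisely when the collection has rank $2q$. Recall that $\calH(\bfz)$ is \emph{all} attack equations and fixations through $\bfz$, while the definition of a vertex of $(\calB^q,\calA_\bbP^q)$ requires $\bfz\in\calB^q$ to be realized as the intersection of \emph{some} such hyperplanes. Full rank of $\bfz$ means $\calH(\bfz)$ has rank $2q = \dim \bbR^{2q}$.

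For the forward direction, assume $\bfz$ has full rank. Then one can select a subcollection $\calH'\subseteq\calH(\bfz)$ of $2q$ linearly independent hyperplanes. Their common intersection is an affine subspace of $\bbR^{2q}$ of dimension $0$, i.e.\ a single point. Because every hyperplane of $\calH'$ passes through $\bfz$, this unique point must be $\bfz$ itself. Since $\calH'$ consists of attack equations and fixations, $\bfz$ is a vertex of $(\calB^q,\calA_\bbP^q)$.

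For the converse, assume $\bfz$ is a vertex. By definition there is a set of attack equations and fixations whose intersection is $\{\bfz\}$. Every hyperplane in this set necessarily contains $\bfz$ and thus belongs to $\calH(\bfz)$. A collection of affine hyperplanes in $\bbR^{2q}$ has a $0$-dimensional intersection only if its rank is $2q$, so this subcollection already has rank $2q$; hence $\calH(\bfz)$, which contains it, has rank exactly $2q$, and $\bfz$ has full rank.

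I do not expect a real obstacle here: the argument is purely definitional, and the only point requiring care is keeping straight the distinction between $\calH(\bfz)$ (all hyperplanes through $\bfz$) and the subcollection that witnesses $\bfz$ as a vertex. Once one observes that the witness must sit inside $\calH(\bfz)$, both directions collapse to the statement that ``rank $=2q$'' is equivalent to ``intersection is a single point'' in $\bbR^{2q}$.
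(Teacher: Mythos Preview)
Your proof is correct and follows essentially the same approach as the paper: extract a linearly independent subcollection of $\calH(\bfz)$ to witness the vertex in one direction, and observe that any witnessing collection must lie inside $\calH(\bfz)$ in the other. The paper's version is terser but the logic is identical.
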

\begin{proof}
Suppose $\bfz$ (and therefore $\calH(\bfz)$) has full rank. By removing redundant hyperplanes, $\calH(\bfz)$ can be reduced to a linearly independent set of hyperplanes $\calH$ of full rank of which $\bfz$ is the intersection point, so  $\bfz$ is a vertex of $(\calB^q, \calA_{\bbP}^q)$.  If $\bfz$ is a vertex, $\calH(\bfz)$ contains this $\calH$, so $\calH(\bfz)$ (and therefore $\bfz$) has full rank.
\end{proof}

\begin{example}\label{ex:fullrank}
Consider the inclined nightrider on the square board with moves $\bfm_1=(2,1)$ and $\bfm_2=(1,2)$.  

When $\bfz = (0, 0, 1, 1/2)$, $\calH(\bfz)$ contains the fixations
$x_1 = 0$, $y_1 = 0$, and $x_2 = 1$ and the attack equation $\bfz_1 \sim_1 \bfz_2$. These four equations form a system of full rank; we conclude  $\calH(\bfz)$ and $\bfz$ have full rank and $\bfz$ is a vertex of  $([0,1]^4,\calA_\bbP^2)$.

When 
$\bfz = (0, 0, 0, 0, 1, 1)$,
$\calH(\bfz)$ consists of the fixations $x_1 = 0$, $y_1 = 0$, $x_2 = 0$, $y_2 = 0$, $x_3 = 1$, and $y_3 = 1$ and the attack equations $\bfz_1 \sim_1 \bfz_2$ and $\bfz_1 \sim_2 \bfz_2$  since $\bfz_1 = \bfz_2$.  $\calH(\bfz)$ contains eight equations; the attack equations are redundant because the fixations uniquely determine $\bfz$; those six equations form a system of full rank, so $\calH(\bfz)$ and $\bfz$ have full rank, and $\bfz$ is a vertex of $([0,1]^6,\calA_\bbP^3)$.

When $\bfz = (1, 1/2, 3/4, 0)$, $\calH(\bfz)=\{
x_1 = 1, y_2 = 0,\bfz_1 \sim_2 \bfz_2$\}, which has rank at most $3$, so $\calH(\bfz)$ is not of full rank and $\bfz$ is not a vertex of $([0,1]^4,\calA_\bbP^2)$.
\end{example}

\begin{lemma} \label{lem:twoequationprop}
Suppose $\calH$ is a hyperplane arrangement consisting of hyperplanes in $\bbR^{2k}$, and \[\bfz = (x_1, y_1, x_2, y_2, \ldots, x_k, y_k) \in \bbR^{2k}\] is the unique intersection point of the elements of $\calH$. Then, for all $i$ between $1$ and $k$, $\calH$ contains at least $2$ hyperplanes whose equations involve either $x_i$ or $y_i$.
\end{lemma}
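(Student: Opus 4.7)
The plan is to proceed by contrapositive: I will show that if fewer than two hyperplanes in $\calH$ involve $x_i$ or $y_i$, then the intersection of the hyperplanes in $\calH$ cannot be a single point. The key idea is a codimension argument using a two-dimensional ``slice'' of $\bbR^{2k}$ along which we perturb only the coordinates $x_i,y_i$.

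Concretely, fix an index $i$ and let $V \subseteq \bbR^{2k}$ be the $2$-dimensional affine plane consisting of all points that agree with $\bfz$ in every coordinate other than $x_i$ and $y_i$. Since $\bfz \in V$, certainly $\bfz$ lies in $V \cap \left(\bigcap_{H\in\calH} H\right)$. I will analyze this intersection hyperplane-by-hyperplane. For any hyperplane $H \in \calH$ whose defining equation does \emph{not} involve $x_i$ or $y_i$, the equation depends only on coordinates that are constant on $V$; since $\bfz\in H$, the equation is satisfied identically on $V$, and hence $V \subseteq H$. On the other hand, for any hyperplane $H \in \calH$ whose equation \emph{does} involve $x_i$ or $y_i$ (whether a fixation on piece $i$, or an attack equation involving piece $i$), restricting that equation to $V$ yields a single nontrivial linear equation in the two free variables $x_i, y_i$, so $H \cap V$ is a $1$-dimensional affine subspace (a line) of $V$.

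Now suppose for contradiction that at most one hyperplane of $\calH$ involves $x_i$ or $y_i$. If there are none, then every $H \in \calH$ contains $V$, so the full intersection contains the entire $2$-dimensional plane $V$, contradicting uniqueness of $\bfz$. If there is exactly one such hyperplane $H_0$, then $V \cap \bigcap_{H\in\calH} H = V \cap H_0$, which is a line in $V$, again contradicting uniqueness. Either way we get infinitely many intersection points, contradicting the hypothesis, so at least two hyperplanes of $\calH$ must involve $x_i$ or $y_i$.

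I do not expect a serious obstacle here: the content is purely linear-algebraic and the argument amounts to the observation that to pin down two coordinates uniquely one needs two independent constraints on them. The only subtlety worth stating carefully is why hyperplanes not involving $x_i,y_i$ contain all of $V$ (rather than missing it entirely), which follows from the fact that $\bfz$ itself satisfies them, combined with $V$ being defined by freezing precisely the complementary coordinates.
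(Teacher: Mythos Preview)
Your proof is correct. It differs from the paper's argument, which works on the algebraic side: the paper first passes to a linearly independent subset of $2k$ hyperplanes in $\calH$, then observes that if at most one of these involves $x_i$ or $y_i$, the remaining $2k-1$ normal vectors would all lie in the $(2k-2)$-dimensional coordinate subspace where the $x_i$- and $y_i$-components vanish, contradicting linear independence. Your argument is the geometric dual of this: rather than confining normals to a subspace, you slice by the complementary $2$-plane $V$ and show the intersection is at least a line. Your route has the mild advantage of not needing to pass to a linearly independent subset first; the paper's route becomes a one-line pigeonhole once that reduction is made. Both are elementary linear algebra. One small remark: your parenthetical ``(whether a fixation on piece $i$, or an attack equation involving piece $i$)'' is extraneous, since the lemma is stated for arbitrary hyperplanes in $\bbR^{2k}$, not just those arising from the chess setup---but this does not affect the validity of the argument.
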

\begin{proof}
Suppose there exists $i$ between $1$ and $k$ such that $\calH$ contains at most one hyperplane with equation involving $(x_i, y_i)$.  Take $\calH$ to contain $2k$ linearly independent hyperplanes (removing redundant hyperplanes as necessary). 

Since $\calH$ only contains one equation involving $x_i$ or $y_i$, $\calH$ contains at least $2k - 1$ hyperplanes whose equations only involve the other $2k-2$ variables, which contradicts the linear independence of $\calH$.
\end{proof}

The rank of a point depends only on the set of its constituent coordinate pairs:
 
\begin{proposition}\label{prop:duplicatepoints}
$\bfz = (\bfz_1, \bfz_2, \ldots, \bfz_q) \in \calB^q$ has full rank if and only if $\bfz' = (\bfz_1, \ldots, \bfz_q, \bfz_q) \in \calB^{q + 1}$ has full rank.
\end{proposition}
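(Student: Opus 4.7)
The plan is to exploit the equality $\bfz_{q+1} = \bfz_q$ by a change of coordinates. I would introduce new coordinates $(x_1, y_1, \ldots, x_q, y_q, u, v)$ on $\bbR^{2(q+1)}$ defined by $u := x_{q+1} - x_q$ and $v := y_{q+1} - y_q$, so that $\bfz'$ becomes the point with $u = v = 0$. My strategy is to prove the stronger identity that the rank of $\calH(\bfz')$ is exactly two more than the rank of $\calH(\bfz)$; since having full rank means having rank equal to the ambient dimension, the proposition follows immediately.

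Next I would partition the equations of $\calH(\bfz')$ into four groups: (i) the equations of $\calH(\bfz)$ (fixations on $\bfz_1, \ldots, \bfz_q$ and attack equations among them), which involve neither $u$ nor $v$; (ii) fixations on $\bfz_{q+1}$; (iii) attack equations $\bfz_i \sim_r \bfz_{q+1}$ with $i < q$; and (iv) the two equations $\bfz_q \sim_1 \bfz_{q+1}$ and $\bfz_q \sim_2 \bfz_{q+1}$, which automatically belong to $\calH(\bfz')$ since $\bfz_q = \bfz_{q+1}$. The identity $\bfz_{q+1} = \bfz_q$ places groups (ii) and (iii) in bijection with corresponding fixations and attack equations on $\bfz_q$ already in $\calH(\bfz)$, and a direct substitution shows that in the new coordinates each equation in (ii) or (iii) differs from its $\calH(\bfz)$-twin only by a linear form purely in $u$ and $v$. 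The two equations of group (iv) become exactly $d_r u - c_r v = 0$ for $r = 1, 2$.

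These observations let me decompose the span of the normals of $\calH(\bfz')$ as an internal direct sum of the span of the normals of $\calH(\bfz)$, which sits inside the first $2q$ coordinates, and a subspace of the $(u,v)$-plane generated by the extra $(u,v)$-contributions. The two group-(iv) equations alone already span the full $(u,v)$-plane because $\bfm_1$ and $\bfm_2$ being non-parallel makes $(d_1, -c_1)$ and $(d_2, -c_2)$ linearly independent. The desired rank identity then follows, giving both directions at once. The subtlest step---and the one I would want to check most carefully---is the bookkeeping in the second paragraph: verifying that every fixation and attack equation on $\bfz_{q+1}$ really has an exact twin in $\calH(\bfz)$, so that no additional rank is hidden in the first $2q$ coordinates beyond what $\calH(\bfz)$ already provides. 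This is routine, but it is precisely where the hypothesis $\bfz_{q+1} = \bfz_q$ does its work.
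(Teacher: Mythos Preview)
Your argument is correct and proves the stronger identity $\operatorname{rank}\calH(\bfz') = \operatorname{rank}\calH(\bfz) + 2$, from which both implications drop out simultaneously. The coordinate change $u = x_{q+1}-x_q$, $v = y_{q+1}-y_q$ is well chosen: in those coordinates the normals of $\calH(\bfz')$ visibly span $W \oplus P$, where $W$ is the span of the normals of $\calH(\bfz)$ sitting in the first $2q$ coordinates and $P$ is the $(u,v)$-plane, the latter being filled out by the two group-(iv) equations because $\bfm_1$ and $\bfm_2$ are non-parallel.

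The paper takes a different, more hands-on route: it treats the two directions separately. For the forward direction it simply appends the two attack equations $\bfz_{q+1}\sim_1\bfz_q$ and $\bfz_{q+1}\sim_2\bfz_q$ to a basis for $\calH(\bfz)$ and checks directly that the enlarged set of normals is independent. For the converse it starts from a linearly independent $\calH'$ of size $2q+2$ defining $\bfz'$, arranges (by swapping in redundant equations if necessary) for $\calH'$ to contain both $\bfz_{q+1}\sim_r\bfz_q$, replaces every other occurrence of $\bfz_{q+1}$ by $\bfz_q$, and then deletes those two equations to obtain $2q$ independent equations for $\bfz$. Your coordinate-change argument is cleaner and more symmetric, and it yields the exact rank increment rather than just the full-rank equivalence; the paper's argument, by contrast, stays closer to the concrete hyperplane description and requires no change of basis, which some readers may find more transparent. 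Both are short; yours has the advantage that the ``bookkeeping'' step you flag is essentially automatic once one notes that $P$ is already contained in the span via group (iv), so the $(u,v)$-tails on groups (ii) and (iii) contribute nothing new.
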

\begin{proof}
First, suppose $\bfz$ has rank $2q$. Then there is a hyperplane arrangement $\calH$ with rank $2q$, whose members are attack equations and fixations involving $\bfz_1, \ldots, \bfz_q$ and whose set $\calN$ of normal vectors forms a basis of $\bbR^{2q}$. Therefore the hyperplane arrangement 
$$\calH \cup \{\bfz_{q + 1} \sim_1 \bfz_q, \bfz_{q + 1} \sim_2 \bfz_q\}$$
is also linearly independent because the set 
$$\calN \cup \{(0, 0, \ldots, d_1, -c_1, -d_1, c_1),(0, 0, \ldots, d_2, -c_2, -d_2, c_2)\}$$
forms a basis of $\bbR^{2(q + 1)}$. 

Now, suppose $\bfz'$ has full rank, so that it is the unique intersection point of a linearly independent hyperplane arrangement $\calH'$, consisting of $2q+2$ attack equations and fixations.  Without loss of generality, we can assume $\calH'$ contains the hyperplanes
$$\bfz_{q + 1} \sim_1 \bfz_q \textup{ and } \bfz_{q + 1} \sim_2 \bfz_q$$
If not, we can add these to $\calH'$ and remove two redundant hyperplanes. 

We can ensure that $\calH'$ has at most two attack equations involving $\bfz_{q+1}$ and no fixations involving $\bfz_{q+1}$ by replacing all other occurrences of $\bfz_{q+1}$ by $\bfz_q$. Then, this equivalent system of equations has exactly two equations involving $\bfz_{q+1}$; removing these two equations leaves $2q$ linearly independent equations involving $\bfz_1$ through $\bfz_q$, so $\bfz$ has rank $2q$.
\end{proof}

The following observation is straightforward but helpful to state explicitly.

\begin{lemma}\label{lem:multiplesolutions}
Let $\bfz=(\bfz_1,\ldots,\bfz_q)\in\calB^q$.  If there exists a point $\bfz'=(\bfz_1',\ldots,\bfz_q')\in\calB^q$ such that $\calH(\bfz)=\calH(\bfz')$ and the sets $\{\bfz_i\}_{1\leq i\leq q}$ and $\{\bfz_i'\}_{1\leq i\leq q}$ are different, then $\bfz$ is not of full rank.
\end{lemma}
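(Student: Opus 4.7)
The plan is to argue by contrapositive: I will show that if $\bfz$ has full rank, then no distinct witness $\bfz'$ can agree with $\bfz$ on its hyperplane arrangement, in particular none whose underlying set of coordinate pairs differs.

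First I would unpack the definitions. By the definition of $\calH(\bfz)$, the point $\bfz$ satisfies every equation in $\calH(\bfz)$. Because $\calH(\bfz')$ is defined analogously and the hypothesis gives $\calH(\bfz) = \calH(\bfz')$, the point $\bfz'$ also satisfies every equation in $\calH(\bfz)$. Next, suppose for contradiction that $\bfz$ has full rank. Then $\calH(\bfz)$ has rank $2q$ in $\bbR^{2q}$, so we may extract a linearly independent subarrangement $\calH \subseteq \calH(\bfz)$ of size $2q$. The system of $2q$ linearly independent linear equations coming from $\calH$ has a unique solution in $\bbR^{2q}$, and since both $\bfz$ and $\bfz'$ satisfy all equations in $\calH$, this forces $\bfz = \bfz'$ as tuples in $\bbR^{2q}$.

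To reach a contradiction, I would observe that the hypothesis that the sets $\{\bfz_i\}_{1 \le i \le q}$ and $\{\bfz_i'\}_{1 \le i \le q}$ differ implies that $\bfz \ne \bfz'$ as tuples: if the ordered tuples agreed, then certainly the underlying sets of coordinate pairs would coincide. This contradicts $\bfz = \bfz'$ obtained above, so $\bfz$ cannot have full rank.

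I do not expect any serious obstacle: the only subtlety is the distinction between equality of tuples and equality of the underlying sets of coordinate pairs, and as just noted the latter is a strictly weaker notion, which is exactly why the lemma is stated in terms of sets rather than tuples. The full-rank machinery needed (extracting $2q$ linearly independent hyperplanes and using the uniqueness of their intersection) is standard linear algebra, essentially identical to the argument used in Lemma~\ref{lem:fullrankvertex}.
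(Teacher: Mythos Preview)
Your proposal is correct and follows essentially the same approach as the paper: both argue that $\bfz$ and $\bfz'$ are distinct solutions to the linear system $\calH(\bfz)$, so this system cannot have full rank. The paper compresses this into a single sentence, while you have spelled out the contrapositive and the tuple-versus-set distinction explicitly, but the underlying idea is identical.
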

\begin{proof}
Because there are two points $\bfz,\bfz'\in\bbR^{2q}$ that satisfy the same system of equations,  $\calH(\bfz)$ (and therefore $\bfz$) is not of full rank.
\end{proof}

\section{The discrete dynamical system for fairy chess}
\label{sec:dynamical}

In this section we make precise the discrete dynamical system that arises naturally in our study of attacking chess piece configurations. The first appearance of such a discrete dynamical system appears to be in works of Arnol'd from the 1950's (see \cite{2006}).  Previous 
study including \cite{GKT, KT} has focused on ellipses and other smooth closed curves and Khmelev studied finitely many 
break-type singularities \cite{khmelev_2005}.  Here we develop the theory further to apply to all bounded convex regions.  Our presentation has been informed by surveys on the billiard model by Gutkin \cite{gutkin2} and Tabachnikov \cite{tabachnikov}. Open problems related to this system have been gathered in Section~\ref{sec:open}.

We start with any bounded convex region $\calR$ (our board) and any nonparallel pair of vectors $\bfm_1$ and $\bfm_2$ (our basic moves). We let $\calM\subseteq \mathbb{S}^1$ consist of the four unit vectors parallel to $\bfm_1$ or $\bfm_2$.  We investigate the movement of a particle, determined by its position $\bfr\in\calR$ and its velocity $\bfv$, restricted to be an element of $\calM$.  The particle moves along the ray starting at $\bfr$ in the direction $\bfv$ until it hits a point $\bfb$ on the boundary of $\calR$, denoted $\partial\calR$.  

In this discrete dynamical system, the particle ``bounces'' differently from billiards. The convexity of $\calR$ implies $\bfb$ has at most two vectors from $\calM$ pointing toward the interior of $\calR$, including $-\bfv$. When there is a second vector $\bfv'$ pointing toward the interior of $\calR$, the particle ``bounces'' and leaves $\bfb$ in that direction, as exemplified in Figure~\ref{fig:dynam}.   
When there is no second vector---which can occur at a corner of $\calR$ or at a point of tangency of $\bfm_1$ or $\bfm_2$, as in Figure~\ref{fig:extended}(c)---we adopt the convention that the particle pauses imperceptibly at $\bfb$ and then returns along $-\bfv$. Going backward in time is as simple as applying the same dynamics after negating the velocity vector.  As such, the particle meanders through $\calR$ on lines parallel to $\bfm_1$ and $\bfm_2$.
(In previous work of Genin, Khesin, and Tabachnikov \cite{GKT} and Khesin and Tabachnikov \cite{KT}, these line segments are geodesics on Lorentz surfaces and are called null lines.)

\begin{figure}[tbp]
    \centering
    \includegraphics[height=2.1in]{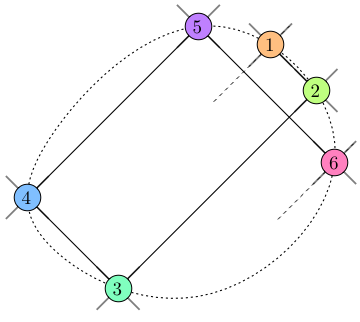}
    \caption{With basic moves $(1,1)$ and $(1,-1)$, consecutive boundary points along the flow lie on lines of slope $1$ and $-1$.}
    \label{fig:dynam}
\end{figure}

Formally, the {\em phase space} $\Psi$ is the quotient of the set $$\{(\bfr, \bfv) \mid \bfr\in \calR, \bfv\in\calM\}$$
by the identifications $(\bfb,\bfv)=(\bfb,\bfv')$ for $\bfb \in \partial\calR$ and nonparallel $\bfv, \bfv' \in \calM$ when $\bfv$ points away from the interior of $\calR$ and $\bfv'$ points toward the interior of $\calR$, as well as $(\bfb,\bfm_1)=(\bfb,\bfm_2)$ and $(\bfb,-\bfm_1)=(\bfb,-\bfm_2)$ for $\bfb \in \partial\calR$ if at most one $\bfv\in\calM$ points toward the interior of $\calR$.  (We have identified only two elements of the set (in an arbitrary manner) instead of all four so that $\bfb$ is repeated in the Poincar\'e map below.) 
The {\em flow} $F^t : \Psi \row \Psi$ of the particle is how the pair $(\bfr, \bfv)$ changes over time: when $\bfr$ is in the interior of $\calR$, it moves with velocity $\bfv$, while once it reaches $\partial\calR$, it switches velocity to $\bfv'$. If a particle reaches $\partial\calR$ with velocity $\bfv$ and neither $\bfv'$ nor $-\bfv'$ points toward the interior of $\calR$, the particle pauses imperceptibly and switches velocity to $-\bfv$. If no vector $\bfv\in\calM$ points toward the interior of $\calR$ from $\bfb\in\partial\calR$, the particle remains at $\bfb$ indefinitely.

The {\em Poincar\'e section} $\Phi = \{(\bfb, \bfv) \in \Psi \mid \bfb \in \partial\calR\}$ is the restriction of the phase space to points in the boundary of $\calR$ and the \textit{chess attack map} $\phi: \Phi \rightarrow \Phi$ is the {\em Poincar\'e map} which describes the transition from one boundary point to the next.  (This chess attack map is the concept analogous to the billiard map.  Further, in \cite{GKT}, their circle map $T$ is equivalent to our $\phi^2$.)

The orbit of a flow $F^t$ yields a doubly-infinite sequence $[(\bfb_i,\bfv_i)]_{i\in\bbZ}$ where $\phi(\bfb_i,\bfv_i)=(\bfb_{i+1},\bfv_{i+1})$ and $\phi(\bfb_i,-\bfv_i)=(\bfb_{i-1},-\bfv_{i-1})$.  
When we record only the points $[\bfb_i]_{i\in\bbZ}$ of this sequence we will call this a {\em trajectory} and again use $\phi$ to denote the transition $\phi(\bfb_i)=\bfb_{i+1}$ when the velocity vector is understood.  We use square brackets for trajectories to differentiate them from ordered $n$-tuples of points in $\calR$.

We say that a trajectory is {\em periodic} if there exists an integer $p\geq 1$ such that $\phi^{n+p}(\bfb)=\phi^{n}(\bfb)$ for all integers $n$, and define its {\em period} to be the smallest such $p$. There are three types of periodic trajectories that appear in this dynamical system: fixed point trajectories, reflection-symmetric periodic trajectories, and cyclic trajectories. 

When a trajectory $T=[\bfb]_{i\in\bbZ}$ has period $p=1$, we say that $\bfb$ is a {\em fixed point} and $T$ is a {\em fixed point trajectory}. Fixed points cannot occur when $\calR$ is a smooth curve; the only place they occur is at a corner of $\calR$ when no vector $\bfv\in\calM$ points toward the interior of $\calR$. 

The period of a periodic trajectory that is not a fixed point trajectory must always be even because the slopes of the incident vectors alternate between being parallel to $\bfm_1$ and $\bfm_2$. 

When a periodic trajectory $T$ with period $p$ satisfies $\bfb_i=\bfb_{i+1}$ for some $i\in\bbZ$, then the trajectory exhibits reflection symmetry in that \[[\bfb_{i+1},\hdots,\bfb_{i+p/2}]=[\bfb_{i+p},\hdots,\bfb_{i+p/2+1}].\]
The flow continually bounces back and forth between the two path endpoints located at $\bfb_{i+1}$ and $\bfb_{i+p/2}$, at which there is only one vector of $\calM$ that points toward the interior of $\calR$ due to a move vector tangency or an arrival at a corner with no second viable direction. We call this a {\em reflection-symmetric periodic trajectory}.

The last type of periodic trajectory is a {\em cyclic trajectory}, in which $[\bfb_1,\hdots,\bfb_p]$ consists of $p$ distinct vectors. Each point $\bfb_i$ of a cyclic trajectory has two vectors of $\calM$ that point toward the interior of $\calR$ from $\bfb_i$. 

Given a point $\bfb\in\partial\calR$, define its {\em trajectory set} to be the set of points in the trajectory $[\bfb_i]_{i\in\bbZ}$ starting at $\bfb$. With this definition, the trajectory set of $\bfb$ is finite if and only if the trajectory through $\bfb$ is periodic. 

\begin{example}\label{ex:extended}
Figure~\ref{fig:extended} exhibits three trajectories.  In Figure~\ref{fig:extended}(a), the dynamical system corresponds to the square board and the basic moves $(10,3)$ and $(11,8)$.  This trajectory is not periodic, nor are there any periodic trajectories other than the fixed points at the upper left and bottom right corners, as proved in Proposition~\ref{prop:samesignconverge}.  

Figure~\ref{fig:extended}(b) shows a hexagonal board with basic moves $(1,2)$ and $(2,1)$.  The chosen trajectory is periodic and overlaps itself infinitely many times; its six points make up a trajectory set.  

The non-polygonal board in Figure~\ref{fig:extended}(c) is made up of two circular curves and one line segment.  The basic moves are $(1,1)$ and $(0,1)$.  We show an example of a periodic trajectory in the corresponding dynamical system---the board has a vertical tangent at $\bfb_2$ and the point $\bfb_{-1}$ is located at a corner with no points of the board accessible vertically. The associated reflection-symmetric periodic trajectory with period 8 is 
\[[\hdots,\bfb_{0},\bfb_{1},\bfb_{2},\bfb_{3},\bfb_{4},\bfb_{5},\bfb_{6},\bfb_{7},\bfb_{8},\bfb_{8},\hdots]=[\hdots,
{\color{blue!20!green!70!black}\bfb_{0}},
{\color{purple!10!blue!70!black}\bfb_{1}},
{\color{magenta!90!black}\bfb_{2}},
{\color{magenta!90!black}\bfb_{2}},
{\color{purple!10!blue!70!black}\bfb_{1}},
{\color{blue!20!green!70!black}\bfb_{0}},
{\color{yellow!20!orange!80!black}\bfb_{-1}},
{\color{yellow!20!orange!80!black}\bfb_{-1}},
{\color{blue!20!green!70!black}\bfb_{0}},
{\color{purple!10!blue!70!black}\bfb_{1}},
\hdots].\]
\end{example}

\begin{figure}[tbp]
\makebox[.2cm]{\raisebox{4.5cm}{(a)}}
\includegraphics[width=5cm]{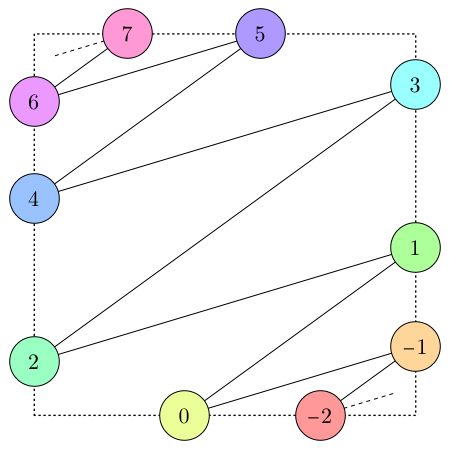}\quad
\makebox[0.2cm]{\raisebox{4.5cm}{(b)}}
\raisebox{.25cm}{\includegraphics[width=5cm]{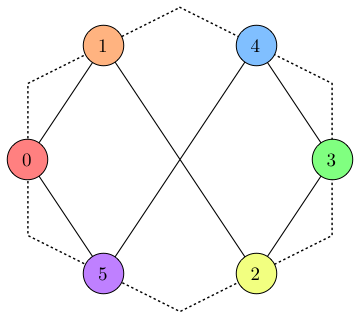}} 
\makebox[0cm]{\raisebox{4.5cm}{(c)}}
\raisebox{.7cm}{\includegraphics[width=5cm]{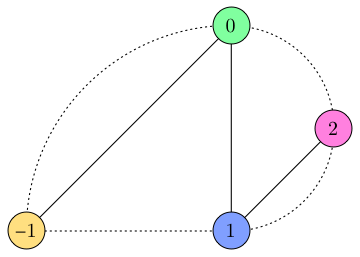}} 
\caption{The behavior of three trajectories for the dynamical systems discussed in Example~\ref{ex:extended}.}
\label{fig:extended}
\end{figure}

It will be useful to also describe the chess attack map using the following {\em antipode maps}, which are involutions on $\partial \calR$, and originate from the case of one-move riders in \cite{qqs4}.

\begin{definition} \label{def:nextpointfunction}
For a bounded convex region $\calR$ and a pair of vectors $\bfm_1=(c_1, d_1)$ and $\bfm_2=(c_2, d_2)$, define $s_r: \partial \calR \rightarrow \partial \calR$ for $r = 1, 2$ as follows. Suppose $\bfb \in \partial \calR$, and consider the line
$$\ell = \{\bfb + \lambda(c_r, d_r)\, |\, \lambda \in \bbR\}.$$
If $\ell \cap \calB^\circ=\emptyset$, define $s_r\bfb = \bfb$. Otherwise, since $\calR$ is convex, $\ell \cap \partial\calR$ has exactly 2 elements and we define $s_r\bfb$ to be the other element.
\end{definition}

The chess attack map for a point $\bfb\in\partial\calR$ and a velocity $\bfv$ pointing toward the interior of $\calR$ can then be described as 
\(\phi(\bfb)=s_r\bfb\), where $\bfv$ is parallel to $\bfm_r$.

For a point $\bfb\in\partial\calR$ and a direction $\bfv\in\calM$ pointing toward the interior of $\calR$, we define a {\em trajectory segment} to be a finite sequence $T=[\bfb,\phi(\bfb),\ldots,\phi^{l-1}(\bfb)]$ of {\em distinct} points.  (The reader should note that in this definition our restriction to distinct points is nonstandard.) We say $T$ has {\em length} $l$.  Equivalently, a trajectory segment is a consecutive subsequence of a trajectory with distinct points.  

Note that when $T$ is part of a cyclic trajectory of period $p$, then the longest trajectory segment $[\bfb_1,\bfb_2,\ldots,\bfb_l]$ is of length $p$ and satisfies $\phi(\bfb_p)=\bfb_1$. We call such a $T$ a {\em cyclic trajectory segment}; it necessarily contains all points in the trajectory set of $\bfb_1$.  The trajectory segment $[\bfb_0,\bfb_1,\hdots,\bfb_5]$ from Figure~\ref{fig:extended}(b) is a cyclic trajectory segment. 

We see that any trajectory segment in $\calR$ can be obtained by alternately applying $s_1$ and $s_2$ to an initial point $\bfb$. In other words, every trajectory segment is of the form
$$[\bfb, s_1\bfb, s_2s_1\bfb, s_1s_2s_1\bfb, \ldots] \quad \textup{ or } \quad [\bfb, s_2\bfb, s_1s_2\bfb, s_2s_1s_2\bfb, \ldots].$$

Critical to our study of periods of counting quasipolynomials are both the points on trajectory segments $T = [\bfb_1, \ldots,\bfb_l]$ and points on the interior of $\calR$ where flows that extend a bit on either side of $\bfb_1$ and $\bfb_l$ cross.  

\begin{definition}
\label{def:crossingpoint}
Let $T_a = [\bfa_1, \bfa_2, \ldots,\bfa_k]$ and $T_b = [\bfb_1, \bfb_2, \ldots,\bfb_l]$ be trajectory segments in $\calR$. We say $\bfc$ is a \textbf{crossing point} of $T_a$ and $T_b$ if $\bfc \in \calR^{\circ}$ and there exist some $i$ and $j$ such that $\bfc$ is contained in the line segments from $\bfa_i$ to $\bfa_{i+1}$ and from $\bfb_j$ to $\bfb_{j+1}$ for some $1\leq i\leq k-1$ and $1\leq j\leq l-1$. If $T_a=T_b$, we say $\bfc$ is a \textbf{self-crossing} point of $T_a$. See Figure~\ref{fig:crossing}.
\end{definition}

\begin{definition}
\label{def:augmentation}
Let $T = [\bfb_1, \ldots,\bfb_l]$ be a trajectory segment in $\calR$. Then $T$ is a consecutive subsequence of a trajectory $T'=[\ldots,\bfb_1, \ldots,\bfb_l,\ldots]$.  We define the {\em augmentation} of $T$ to be the sequence of points including $\bfb_1$ through $\bfb_l$ where we prepend $\bfb_0$ from $T'$ and we postpend $\bfb_{l+1}$.  
\end{definition}

\begin{remark}
An augmentation of a cyclic trajectory segment will no longer qualify as a trajectory segment because it has repeated vertices. On the other hand, the flow corresponding to the augmentation of a cyclic trajectory segment $T$ traces out the entire cycle that the trajectory traverses.  Furthermore, crossing points of augmentations of trajectory segments may exist that are not crossing points of the trajectory segments themselves, as shown in Figure~\ref{fig:crossing}(a). Last, in the case of a trajectory segment where $\bfb_0=\bfb_1$ or $\bfb_l=\bfb_{l+1}$, its augmentation is also not a trajectory segment, but no additional line segments (nor crossings) have been created. 
\end{remark}

\begin{figure}[t]
\makebox[.2cm]{\raisebox{4.5cm}{(a)}}
\includegraphics[width=5cm]{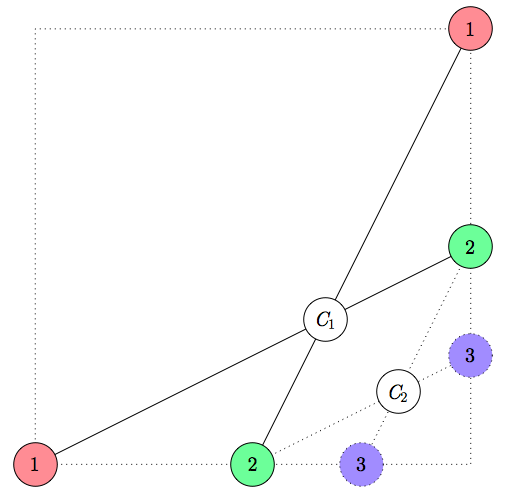} \qquad\qquad
\makebox[.2cm]{\raisebox{4.5cm}{(b)}}
\includegraphics[width=5cm]{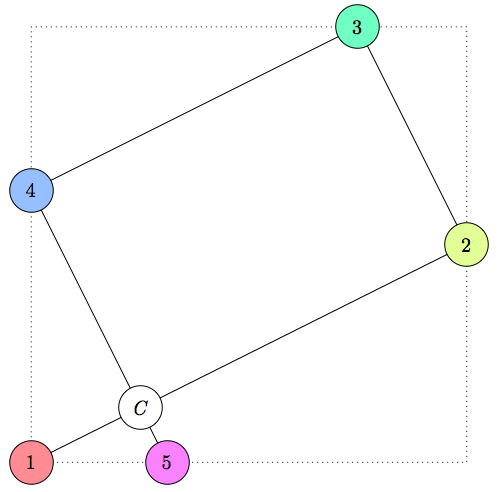}
\caption{(a) For the square board when $\bbP$ has moves $(2, 1)$ and $(1, 2)$, the two two-point trajectory segments starting at $(0, 0)$ and $(1, 1)$ have a crossing point at $C_1=(2/3,1/3)$.  The augmentations of these trajectory segments (for which we have only shown the forward continuation to the points labeled `3') have a crossing point at $C_2=(5/6,1/6)$.  (b) When $\bbP$ has moves $(2, 1)$ and $(1, -2)$, the five-point corner trajectory segment starting at $(0,0)$ has a self-crossing point at $(1/4,1/8)$.
}
\label{fig:crossing}
\end{figure}

\section{Trajectories on polygonal boards} \label{sec:proofs}

We apply the discrete dynamical system to the $q$-Queens Problem by restricting to general convex polygonal regions $\calB$.  We prove a characterization of the set of vertices $\bfz=(\bfz_1,\dots,\bfz_q)$ of the inside-out polytope $(\calB^q, \calA_\bbP^q)$ that depends on whether the points $\bfz_i$ lie on certain trajectory segments or are crossing points thereof. 

\subsection{Corner trajectory segments and rigid cycles}  It is natural to extend the notion of rank to a trajectory segment $T$ in $\calB$.  We define the {\em rank} of a trajectory segment $T$ to be the rank of the collection of points in $T$ (recall that the points of $T$ must all be distinct).  We characterize the types of trajectory segments that are of full rank. 

\begin{definition} \label{def:cornertrajectory}
A trajectory segment $T$ is called a \emph{corner trajectory segment} if it contains a corner of $\calB$.  
\end{definition}

\begin{definition} \label{def:cyclicaltrajectory}
Let $T=[\bfb_1, \ldots, \bfb_k]$ be a cyclic trajectory segment.  If the point $(\bfb_1, \ldots, \bfb_k)$ has full rank, $T$ is called a \emph{rigid cycle}; otherwise $T$ is called a \emph{treachery}.
\end{definition}

Only for certain choices of $\calB$ and $\bbP$ do rigid cycles exist.  The characterization of when they exist is open; see Question~\ref{q:rigid}.

\begin{example}\label{ex:rigidcycle}
Let $\calB = [0, 1]^2$ and consider the piece $\bbP$ with moves $\bfm_1=(m, 1)$ and $\bfm_2=(-1, m)$ where $m > 1$.  Choose $\bfb_1=(x_1,y_1)$ along the south edge of $\calB$, so that $\bfb_2=(x_2,y_2)=s_1\bfb_1$ lies along its east edge, $\bfb_3=(x_3,y_3)=s_2\bfb_2$ lies along its north edge, and $\bfb_4=(x_4,y_4)=s_1\bfb_3$ lies along its west edge.  If $\bfb_1=s_2\bfb_4$, the trajectory segment $T=[\bfb_1,\bfb_2,\bfb_3,\bfb_4]$ is cyclic and the coordinates of the points are given by the system of equations
\begin{equation}\label{eq:system}
\{\bfb_1\sim_1\bfb_2,~\bfb_2\sim_2\bfb_3,~\bfb_3\sim_1\bfb_4,~\bfb_4\sim_2\bfb_1,~y_1 = 0,~x_2 = 1,~y_3 = 1,~x_4 = 0\}.
\end{equation}
$T$ is a rigid cycle because when $m>1$ the unique solution to this system is 
$$\bfz = \Big(\frac{1}{1 + m}, 0, 1, \frac{1}{1 + m}, \frac{m}{1 + m}, 1, 0, \frac{m}{1 + m}\Big).$$
Notice this implies $\bfz$ is a vertex of $([0,1]^8, \calA_\bbP^4)$. Figure~\ref{fig:periodic}(a) shows the special case when $m = 2$.  This example is generalized and studied in Section~\ref{sec:oppsigns}.
\end{example}

\begin{figure}[tbp]
\makebox[.2cm]{\raisebox{4.5cm}{(a)}}
\includegraphics[width=5cm]{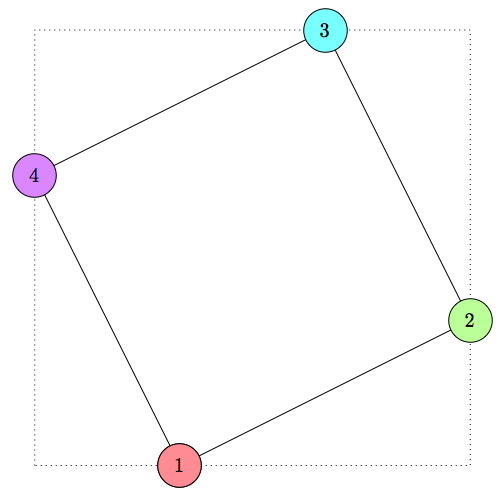}\qquad\qquad
\makebox[.2cm]{\raisebox{4.5cm}{(b)}}
\includegraphics[width=5cm]{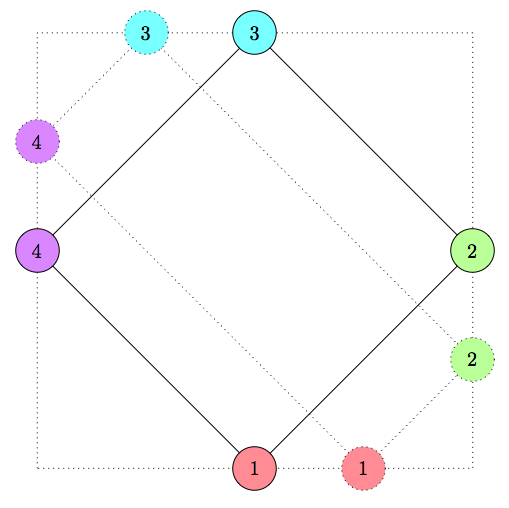}
\caption{(a) For the piece with basic moves $(2, 1)$ and $(1, -2)$ the cyclic trajectory segment starting at $\bfb_1 = (\frac{1}{3}, 0)$ is a rigid cycle.  See Example~\ref{ex:rigidcycle}.  (b) For the bishop, every cyclic trajectory segment is a treachery.  Note that the solid and dotted trajectory segments have the same associated hyperplane arrangement $\calH(\bfb)$. See Example~\ref{ex:bishop}.}
\label{fig:periodic}
\end{figure}

\begin{example}\label{ex:bishop}
When $\calB = [0, 1]^2$ and $\bbP$ is the bishop with moves $(1, 1)$ and $(1, -1)$, there are no rigid cycles. Trajectory segments fall into two cases---either they contain two opposite corners of $\calB$ or they form a cyclic trajectory segment $T=[(x,0),(1,1-x),(1-x,1),(0,x)]$ which is a rectangle. The points of $T$ have as their associated hyperplane arrangement the system \eqref{eq:system} when $m=1$, which is no longer of full rank. We conclude $T$ is a treachery.  Alternatively, we see that all cyclic trajectory segments satisfy the system \eqref{eq:system}, so by Lemma~\ref{lem:multiplesolutions}, they are not of full rank.  See Figure~\ref{fig:periodic}(b).
\end{example}

Figure~\ref{fig:periodic}(b) suggests that treacheries can be displaced slightly to obtain new treacheries.  We will make this precise in Theorem~\ref{thm:treacheryinterval}. The following lemma is essential for classifying trajectory segments that correspond to vertices of the inside-out polytope.

\begin{lemma} \label{lemma:cornertrajectoryrank}
Corner trajectory segments have full rank.
\end{lemma}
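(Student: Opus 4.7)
The plan is to fix a corner trajectory $T=[\bfb_1,\ldots,\bfb_k]$ together with a corner $\bfb_i$ of $\calB$ contained in $T$, and then exhibit a linearly independent subsystem $\calH'\subseteq\calH(T)$ of exactly $2k$ equations. Since $\calH(T)$ consists of hyperplanes in $\bbR^{2k}$, producing such an $\calH'$ immediately forces $\calH(T)$ to have full rank $2k$.

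I build $\calH'$ by first including the two fixations corresponding to the two sides of $\calB$ meeting at the corner $\bfb_i$, and then walking outward from $\bfb_i$ along $T$ in each direction, visiting the remaining points in the order $\bfb_{i+1},\ldots,\bfb_k,\bfb_{i-1},\ldots,\bfb_1$. For each such $\bfb_j$ I adjoin to $\calH'$ the attack equation linking $\bfb_j$ to its already-added neighbor $\bfb_{j'}$ in $T$, together with one fixation from a side of $\calB$ containing $\bfb_j$. This gives exactly $2+2(k-1)=2k$ equations. Ordering variables as $(x_i,y_i),(x_{i+1},y_{i+1}),\ldots,(x_k,y_k),(x_{i-1},y_{i-1}),\ldots,(x_1,y_1)$ makes the coefficient matrix of $\calH'$ block lower triangular with $2\times 2$ diagonal blocks, since the two equations added for $\bfb_j$ involve only the coordinates of $\bfb_j$ and its earlier neighbor $\bfb_{j'}$.

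It therefore suffices to check that each $2\times 2$ diagonal block is non-singular, which amounts to checking that two vectors in $\bbR^2$ are non-parallel. The corner block's rows are the normals to the two distinct sides meeting at $\bfb_i$, which are linearly independent. Every subsequent block pairs the attack equation's normal $(d_r,-c_r)$ against the chosen fixation's normal $(\alpha,\beta)$, and these are non-parallel iff the move $\bfm_r$ is not parallel to the chosen side through $\bfb_j$. When $\bfb_j$ lies on two sides (i.e., is itself a corner of $\calB$), the two sides have distinct directions, so at least one of them is non-parallel to $\bfm_r$ and I pick that one for the fixation.

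The main obstacle is the case that $\bfb_j$ lies on a unique side $S$; I argue $\bfm_r\not\parallel S$ by contradiction. If $\bfm_r\parallel S$, then the chord from $\bfb_{j'}$ to $\bfb_j$ passes through $\bfb_j\in S$ in the direction of $S$, so it lies entirely on the line supporting $S$; convexity of $\calB$ then forces the other endpoint $\bfb_{j'}$ to lie in $S$ as well. But then the line $\ell$ in Definition~\ref{def:nextpointfunction} satisfies $\ell\cap\calB^\circ=\emptyset$, so $s_r\bfb_{j'}=\bfb_{j'}$, contradicting the trajectory relation $\bfb_j=s_r\bfb_{j'}$ (recalling that $s_r$ is an involution) combined with $\bfb_j\neq\bfb_{j'}$. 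Hence every diagonal block is non-singular, $\calH'$ has rank $2k$, and so does $\calH(T)$.
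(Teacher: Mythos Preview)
Your proof is correct and follows essentially the same idea as the paper's: anchor at the corner with its two fixations, then for each remaining point add one attack equation and one well-chosen fixation, checking that the move direction is not parallel to the chosen side. The paper organizes this as an induction that peels off an endpoint, whereas you build the system directly from the corner outward and verify independence via the block lower triangular structure; your treatment of the case where an intermediate $\bfb_j$ is itself a corner (choosing the side not parallel to $\bfm_r$) is slightly more explicit than the paper's.
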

\begin{proof}
We show that every corner trajectory segment $T=[\bfb_1,\ldots,\bfb_{l}]$ has full rank by induction on $l$. When $l=1$, $\bfb_1$ is a corner and hence the intersection of two linearly independent fixations; we conclude $T$ has rank~$2$.

Now suppose $l > 1$ is an integer, and all corner trajectory segments of shorter length $l'<l$ have full rank. Suppose that $\bfb_l$ is not a corner of $\calB$, so that  $T'=[\bfb_1,\ldots,\bfb_{l-1}]$ remains a corner trajectory segment, and therefore has full rank. Then $\bfz'=(\bfb_1,\ldots,\bfb_{l-1})$ is the unique intersection point of a set $\calH'$ of $2l-2$ hyperplanes.

The point $\bfb_l$ equals $s_r\bfb_{l-1}$ for some $r\in\{1,2\}$ and also lies along an edge $\alpha_1 x + \alpha_2 y=\beta$ of $\calB$.  The set of equations $\calE=\{(\alpha_1,\alpha_2)\cdot\bfz_l=\beta, \bfb_{l-1}\sim_r\bfz_l\}$ is linearly independent because $\bfb_l - \bfb_{l - 1}$ is not parallel to the edge $\alpha_1 x + \alpha_2 y=\beta$. (Had the difference been parallel, the definition of $s_r$ would imply that $\bfb_{l}=s_r\bfb_{l-1}$, however there are no repeated vertices in a trajectory segment.)

Therefore the set of $2l$ hyperplanes $\calH=\calH' \cup \calE$ is linearly independent and uniquely defines the vertex $\bfz=(\bfb_1,\ldots,\bfb_{l})$; we conclude that $T$ has full rank.

Finally, if $\bfb_l$ is a corner of $\calB$, we can use a similar argument by removing $\bfb_1$.
\end{proof}

\begin{proposition} \label{prop:cornertrajectoryrank}
The only trajectory segments of full rank are corner trajectory segments and rigid cycles. 
\end{proposition}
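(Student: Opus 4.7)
The plan is to argue the contrapositive: if $T = [\bfb_1, \ldots, \bfb_l]$ is a trajectory that is neither a corner trajectory nor a rigid cycle, then $T$ cannot be of full rank. A cyclical trajectory with full rank is a rigid cycle by Definition~\ref{def:cyclicaltrajectory}, so the substantial case is when $T$ is non-cyclical and contains no corners, and the goal is to show such a $T$ has rank strictly less than $2l$.

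The first step is to enumerate the hyperplanes in $\calH(\bfz)$ for $\bfz = (\bfb_1, \ldots, \bfb_l)$. Since no $\bfb_i$ is a corner, each $\bfb_i$ lies on exactly one edge of $\calB$ and contributes exactly one fixation, giving $l$ fixations in total. The consecutive pairs of $T$ contribute $l - 1$ attack equations of the form $\bfb_i \sim_{r_i} \bfb_{i+1}$. The key claim is that $\calH(\bfz)$ contains \emph{no other} hyperplanes: any additional relation $\bfb_i \sim_r \bfb_j$ with $|i - j| \geq 2$ will force $T$ to be cyclical, contradicting our assumption. Once this is established, $|\calH(\bfz)| \leq 2l - 1 < 2l$, so the rank is strictly less than $2l$ and we are done.

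To prove the key claim, I would use the antipode maps $s_r$ from Definition~\ref{def:nextpointfunction}. Since $\bfb_i \neq \bfb_j$ lie on a common line parallel to $\bfm_r$ with both endpoints on $\partial \calB$, convexity forces $\bfb_i = s_r \bfb_j$. When $j$ is an interior index of $T$, both directions are used at $\bfb_j$, so $\{s_1 \bfb_j, s_2 \bfb_j\} = \{\bfb_{j-1}, \bfb_{j+1}\}$, immediately giving $|i - j| = 1$, a contradiction. The delicate case is when $j \in \{1, l\}$. For $j = 1$, if $r$ matches the direction used in $T$ then $\bfb_i = s_r \bfb_1 = \bfb_2$ is consecutive; otherwise $\bfb_i = s_r \bfb_1$ coincides with the would-be predecessor $\bfb_0$ of $\bfb_1$ in the extended trajectory. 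Applying the chess attack map to both sides of $\bfb_i = \bfb_0$ yields $\bfb_{i+1} = \bfb_1$, and distinctness of the trajectory points forces $i = l$ with $\phi(\bfb_l) = \bfb_1$, making $T$ cyclical. The case $j = l$ is symmetric.

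The main obstacle is exactly this endpoint analysis: carefully ruling out non-consecutive attack equations involving the endpoints of a non-cyclical trajectory. Interior indices are handled cleanly by the antipode observation, but endpoints require tracking the deterministic forward and backward evolution of the extended trajectory and using distinctness of the points of $T$ to force the coincidence $\bfb_i = \bfb_0$ to close the trajectory up into a cycle.
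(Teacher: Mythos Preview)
Your approach is essentially the paper's: both argue that a non-cyclical trajectory with no corners has exactly $l$ fixations (one per point) and $l-1$ attack equations (one per consecutive pair), giving at most $2l-1$ independent hyperplanes. The only cosmetic difference is at the endpoints: the paper observes directly that since $T$ is not cyclical, one of $s_1\bfb_l,\,s_2\bfb_l$ either lies outside $T$ or equals $\bfb_l$, so $\bfb_l$ participates in at most one attack equation; you instead trace the would-be predecessor $\bfb_0$ and close up a cycle. These are the same argument read in opposite directions.

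Two small points to tighten. First, your sentence ``applying the chess attack map to both sides of $\bfb_i=\bfb_0$ yields $\bfb_{i+1}=\bfb_1$'' tacitly assumes the direction used at $\bfb_i$ in $T$ matches the direction from $\bfb_0$ to $\bfb_1$; when the parities disagree you instead obtain $\bfb_{i-1}=\bfb_1$, which also contradicts distinctness, so the conclusion survives but both parities should be stated. Second, your claim that $\calH(\bfz)$ contains \emph{no other} hyperplanes can fail when an edge of $\calB$ is parallel to some $\bfm_r$ and two trajectory points lie on it: then $\bfb_i\sim_r\bfb_j$ holds without $\bfb_i=s_r\bfb_j$. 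This extra attack equation is, however, a linear combination of the two edge fixations, so the rank (as opposed to the cardinality) of $\calH(\bfz)$ is still at most $2l-1$. Phrase the bound in terms of rank rather than $|\calH(\bfz)|$ and the argument is complete.
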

\begin{proof}
We show that non-cyclic trajectory segments $T$ that are of full rank must contain a corner. The statement then follows from Lemma~\ref{lemma:cornertrajectoryrank}.

Suppose $T = [\bfb_1, \ldots, \bfb_l]$ has rank $2l$ and is not cyclic. Let $\bfz = (\bfb_1, \ldots, \bfb_l) \in (\partial\calB)^l$. There exists a set of hyperplanes $\calH \subseteq \calH(\bfz)$ with size and rank $2l$ whose unique intersection point is $\bfz$. Since $T$ is not cyclic, it either does not contain one of $s_1\bfb_l$ and $s_2\bfb_l$, or one of these is equal to $\bfb_l$ (if the line parallel to a move does not intersect the interior of $\calB$). 

In both cases, there can only be one attack equation between $\bfb_l$ and another point $\bfb_j$. Since the points of $T$ are distinct, for all $j$ between $2$ and $l - 1$, $\bfb_j$ can only be related to $\bfb_{j - 1}$ and $\bfb_{j + 1}$ through attack equations. Finally, $\bfb_1$ can only be related to $\bfb_2$ through an attack equation. In summary, $\calH$ contains at most $l - 1$ attack hyperplanes. If each $\bfb_j$ lies on only one fixation, then $\calH$ contains at most $2l - 1$ hyperplanes, which is impossible because $\calH$ has full rank. Therefore, at least one of the $\bfb_j$ is a corner of $\calB$.
\end{proof}

We now prove a characterization of when a cyclic trajectory segment is a treachery.

\begin{theorem} \label{thm:treacheryinterval}
Let $\calB$ be a convex polygon and let $T$ be a cyclic trajectory segment that is not a corner trajectory segment. Then $T = [\bfb_1, \ldots, \bfb_k]$ is a treachery if and only if there exists a $\delta>0$ and a family of cyclic trajectory segments
$T(t)=[\bfb_1(t), \ldots, \bfb_k(t)]$ such that 
$T(0)=T$, $\bfb_i(t)$ is a continuous function of $t$, and $\bfb_i(t)$ lies on the interior of the same side of $\calB$ as $\bfb_i$ for all $t \in (-\delta, \delta)$. 
\end{theorem}
\begin{proof}
Let $\bfb=(\bfb_1, \ldots, \bfb_k)$, $\calH$ be the hyperplane arrangement associated to $\bfb$, and  $A$ be the $|\calH|\times 2k$ matrix given by the equations of the hyperplanes of $\calH$ in the standard bases.  Since $T$ is a treachery there is a non-zero vector $\bfv=(\bfv_1, \bfv_2, \ldots, \bfv_k)\in \bbR^{2k}$ in the null space of $A$. We will show that there exists a $\delta>0$ such that for all $t\in(-\delta,\delta)$,  \[T(t)=[\bfb_1+t\bfv_1,\ldots,\bfb_k+t\bfv_k]\] is a treachery in $\calB$ with points lying on the same sides of $\calB$ as $T$.

The hyperplanes of $\calH$ include the fixations that correspond to the edges $E_i$ of $\calB$ on which the points $\bfb_i$ lie.  That is, $(\alpha_1,\alpha_2)\cdot \bfb_i=\beta$ for the appropriate $\alpha_1$, $\alpha_2$, and $\beta$.  As a consequence, $(\alpha_1,\alpha_2)\cdot \bfv_i=0$ so $\bfv_i$ must be parallel to the edge $E_i$.  Therefore the set of points defined by $I_i=\bfb_i+t\bfv_i$ for $t$ in an interval $(-\delta_i,\delta_i)$ lie along the line that includes the edge $E_i$.  There may be other points of $T$ that lie on this line, so choose $\delta_i>0$ such that it contains no points of $E_i$ that are more than halfway to any other point of $T$ or any vertex of $\calB$.  This is possible because the points of $T$ are distinct and a treachery contains no corner points by Lemma~\ref{lemma:cornertrajectoryrank}.
Choose $\delta=\min(\delta_1,\ldots,\delta_k)$.  Then $T(t)$ is a set of trajectory segments with points lying on the same sides of $\calB$ as $T$ for all $t\in(-\delta,\delta)$.

Since $A(\bfb+t\bfv) = A\bfb$ for  $t\in(-\delta,\delta)$, $\bfb+t\bfv$ satisfies the same set of attack equations and fixations as $\bfb$.  (which are not of full rank). The above choice of $\delta$ ensures no other attack equations or fixations are satisfied by the points of $T(t)$. This is because no point of $T(t)$ becomes a corner point and no two points of $T(t)$ coalesce.  Therefore $T(t)$ is a treachery for all $t\in(-\delta,\delta)$.

\medskip
To prove the converse, suppose there exists a $\delta > 0$ such that $T = [\bfb_1, \ldots, \bfb_k]$ is a member of a family of cyclic trajectory segments $T(t) = [\bfb_1(t), \ldots, \bfb_k(t)]$ for $t \in (-\delta, \delta)$ in which $T(0)=T$, $\bfb_i(t)$ is a continuous function of $t$, and $\bfb_i(t)$ lies on the interior of $E_i$ for all $i$. 

We will show that $T$ is a treachery by showing that some of the members of the family $T(t)$ satisfy the same attack equations and fixations as $T$ does, thereby showing that the set of these equations is not of full rank. 

First, every member of $T(t)$ satisfies the same fixations because the points $\bfb_i(t)$ lie on the same edges. 
Moreover, since $\bfb_i$ and $\bfb_{i+1}$ are attacking along move vector $\bfm_r$ for $r=1$ or $r=2$, we know that there exists an $\epsilon_i>0$ such that for all $t\in(-\epsilon_i,\epsilon_i)$, $\bfb_i(t)$ and $\bfb_{i+1}(t)$ must also attack along $\bfm_r$ because $\bfm_1$ and $\bfm_2$ have fixed non-equal slopes.  We can also ensure that no new attack equations appear by reducing $\epsilon_i$ further if necessary, similar to the argument above. By taking $\epsilon=\min(\delta,\epsilon_1,\hdots,\epsilon_k)$, we ensure that for all $t\in(-\epsilon,\epsilon)$, $T(t)$ satisfies the exact same set of attack equations as $T$. We conclude that $T$ is a treachery.
\end{proof}

\begin{remark}
No two treacheries in the continuous family $T(t)$ share a point. This is because neither move vector $\bfm_1$ nor $\bfm_2$ is parallel to any edge $E_i$ since the $T(t)$'s are cyclic trajectory segments.
\end{remark}

\subsection{The vertices and denominator of the inside-out polytope}

We will determine the vertices and denominator of the inside-out polytope by understanding points $(\bfb_1,\ldots,\bfb_q)\in\calB^q$. 

\begin{lemma} \label{lemma:partitionintotrajectories}
Let $\calR$ be a bounded convex region, $\bbP$ be a piece with basic moves $\bfm_1$ and $\bfm_2$, and $S$ be a finite subset of $\partial\calR$.  Then $S$ can be partitioned into a set of trajectory segments $\calT(S)$ that travel along paths parallel to $\bfm_1$ and $\bfm_2$. 
\end{lemma}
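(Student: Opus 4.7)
The plan is to phrase the partition in terms of a graph whose connected components will be the desired trajectories. Form an undirected graph $G$ with vertex set $S$ and, for each $r\in\{1,2\}$, place a color-$r$ edge between $\bfa$ and $\bfb$ whenever $\bfa\neq\bfb$, $\bfa,\bfb\in S$, and $\bfb=s_r\bfa$. Because $s_r$ is a well-defined involution on $\partial\calR$, each vertex $\bfa\in S$ has at most one color-$1$ edge (to $s_1\bfa$, if it lies in $S$ and differs from $\bfa$) and at most one color-$2$ edge. Hence $\Delta(G)\leq 2$. Moreover, the two edges incident to a degree-$2$ vertex must have different colors, and the graph has no multi-edges, since the lines through $\bfa$ parallel to $\bfm_1$ and $\bfm_2$ are distinct (the moves are non-parallel) and share only the point $\bfa$.

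Next I would observe that every connected component of a graph with maximum degree $2$ is either a finite path, a finite cycle, or a two-way infinite path; the latter does not occur here because $S$ is finite. Because edges at any internal vertex alternate colors and cycles have no endpoints, each component is either an alternating-color path $\bfb_1,\bfb_2,\ldots,\bfb_l$ (with $\bfb_{i+1}=s_{r_i}\bfb_i$ and $r_i$ alternating between $1$ and $2$) or an alternating-color cycle of even length. Reading off the vertices of each component in order produces a sequence of distinct points on $\partial\calR$ in which successive points are related by alternating applications of $s_1$ and $s_2$, which is exactly the form of a trajectory (respectively a cyclical trajectory) described in Section~\ref{sec:dynamical} via the equivalence $\phi(\bfb_i)=s_{r_i}\bfb_i$ for the appropriate velocity direction.

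Let $\calT(S)$ be the collection of trajectories obtained from the components of $G$. Since connected components partition the vertex set, and singleton components are trivially trajectories of length $1$, the collection $\calT(S)$ is a partition of $S$ into trajectories whose edges are parallel to $\bfm_1$ or $\bfm_2$ by construction.

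The only real subtlety I anticipate is verifying that a cyclical component of $G$ genuinely corresponds to a cyclical trajectory in the sense of the paper and not a spurious coincidence: one must check that when one reads the component as a sequence $[\bfb_1,\ldots,\bfb_p]$ starting from an arbitrary vertex, the closing relation $\phi(\bfb_p)=\bfb_1$ really holds with a consistent velocity assignment, and in particular that the cycle length $p$ is even. Both facts follow immediately from the alternation of edge colors around the cycle (which forces $p$ to be even) together with Definition~\ref{def:nextpointfunction}. Apart from this bookkeeping, every step is a direct consequence of the basic facts that $s_1,s_2$ are involutions on $\partial\calR$ and that the two reference lines through any boundary point are distinct.
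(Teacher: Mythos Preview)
Your proof is correct and follows essentially the same approach as the paper: both build a graph on $S$ with edges recording the relations $s_r\bfa=\bfb$, observe that the maximum degree is $2$, and read off each path or cycle component as a trajectory. Your version is slightly more detailed (edge coloring, the no-multi-edge check, and the parity of cycle lengths), but the underlying argument is identical.
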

\begin{proof}
Suppose $S = \{\bfz_1, \ldots, \bfz_q\}$.  Create a graph with vertices labeled by $S$ with an edge $\{\bfz_i, \bfz_j\}$ if $\bfz_i \neq \bfz_j$ and $\bfz_i = s_1\bfz_j$ or $\bfz_i = s_2\bfz_j$. Every vertex in this graph has degree at most $2$, so each connected component is either a cycle or a path. Within each connected component the edges will alternate between corresponding to $s_1$ and $s_2$. Writing the vertices of a connected component in the order given by the path or cycle gives a trajectory segment. 
\end{proof}

\begin{lemma} \label{lemma:pointpartition}
Let $\calB$ be a bounded convex polygon, $\bbP$ be a piece with basic moves $\bfm_1$ and $\bfm_2$, and $S$ be a finite subset of $\partial\calB$.  The rank of $S$ is the sum of the ranks of the trajectory segments in $\calT(S)$; $S$ has full rank if and only if each of the trajectory segments in $\calT(S)$ has full rank.
\end{lemma}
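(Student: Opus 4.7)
The plan is to decompose the hyperplane arrangement $\calH(\bfz)$ associated to $\bfz=(\bfb_1,\ldots,\bfb_q)$ (where $S=\{\bfb_1,\ldots,\bfb_q\}$) into blocks indexed by trajectories, and then observe that these blocks live in disjoint coordinate subspaces. First I would apply Lemma~\ref{lemma:partitionintotrajectories} to obtain the partition $S=T_1\sqcup\cdots\sqcup T_k$, and for each $i$ let $\calH_i$ be the collection of fixations on the points of $T_i$ together with the attack equations between pairs of points of $T_i$. The goal is to show that $\calH(\bfz)$ has the same rank as the disjoint union $\bigsqcup_i\calH_i$.

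The crucial step is showing that every attack equation $\bfb_i \sim_r \bfb_j$ between two points lying in different trajectories is redundant modulo the fixations already present in $\calH(\bfz)$. If $\bfb_i \sim_r \bfb_j$ holds, the two points lie on a common line $\ell$ parallel to $\bfm_r$, and by convexity $\ell\cap\partial\calB$ either equals $\{\bfb_i,\bfb_j\}$ or else $\ell$ contains an entire edge of $\calB$. In the first case $\bfb_i=s_r\bfb_j$, so the graph constructed in the proof of Lemma~\ref{lemma:partitionintotrajectories} would connect them and place them in the same trajectory, contradicting the assumption. In the second case, both points lie on a common edge of $\calB$ parallel to $\bfm_r$; the normal $(\alpha_1,\alpha_2)$ of that edge is parallel to the attack normal $(d_r,-c_r)$, so the difference of the two identical fixation equations for $\bfb_i$ and $\bfb_j$ is a nonzero scalar multiple of $(\bfb_i-\bfb_j)\cdot(d_r,-c_r)=0$, which is exactly the attack equation.

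After discarding these redundant cross-trajectory attack equations, the remaining arrangement is the disjoint union $\bigsqcup_i\calH_i$. Since each $\calH_i$ involves only the $2|T_i|$ coordinates belonging to points of $T_i$, and these coordinate sets are disjoint across $i$, the normals of equations in different blocks have disjoint supports, so the ranks add: $\operatorname{rank}(\calH(\bfz))=\sum_i\operatorname{rank}(\calH_i)=\sum_i\operatorname{rank}(T_i)$, which is the first assertion. Since $\operatorname{rank}(T_i)\leq 2|T_i|$ with equality precisely when $T_i$ has full rank, and $\sum_i 2|T_i|=2|S|$, the second assertion follows: $S$ has full rank if and only if every $T_i$ does.

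The main obstacle is the degenerate situation where an edge of $\calB$ is parallel to a basic move. The graph of Lemma~\ref{lemma:partitionintotrajectories} then disconnects certain pairs of points that genuinely attack (because the relevant $s_r$ acts as the identity there), so the decomposition of $\calH(\bfz)$ only works after verifying that these stray attack equations are absorbed by the coincident edge fixations, which is the short calculation outlined above.
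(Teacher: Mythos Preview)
Your argument is correct and follows essentially the same approach as the paper: decompose $\calH(\bfz)$ into per-trajectory blocks, show that any attack equation linking points in different trajectories is redundant because those points must lie on a common edge of $\calB$ parallel to the relevant move (so the two fixations already span it), and then add ranks across the resulting block decomposition. Your version is in fact more explicit than the paper's, since you spell out the convexity dichotomy that forces cross-trajectory attacking pairs onto a shared edge and you state the disjoint-support reason that ranks are additive.
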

\begin{proof}
Let $S=\{\bfz_1,\ldots,\bfz_l\}$ and define $\bfz=(\bfz_1,\ldots,\bfz_l)$.  The rank of $S$ equals the rank of $\calH(\bfz)$, which includes all hyperplanes from each individual trajectory segment in $\calT(S)$ and attack equations between distinct trajectory segments in $\calT(S)$.  These latter equations do not exist unless $\bfz_i$ and $\bfz_j$ are in different connected components and lie on the same side of $\calB$ that is parallel to a move of $\bbP$.  In this case there are two fixations in $\calH(\bfz)$, with equations involving $\bfz_i$ and $\bfz_j$ respectively, whose equations imply the attack equation linking $\bfz_i$ and $\bfz_j$. This means we can remove the attack equation with this equation from $\calH(\bfz)$ without affecting the rank of $\calH(\bfz)$.  This concludes the proof.
\end{proof}

This tells us exactly which $(\bfb_1,\ldots,\bfb_q)\in(\partial\calB)^q$ have full rank. We can extend this knowledge to points in $\calB^q$.

\begin{theorem}\label{thm:vertices}
Suppose $\bfz = (\bfz_1, \bfz_2, \ldots, \bfz_q) \in \calB^q$ and partition  $S=\{\bfz_i\}_{1 \leq i \leq q}$ into $B\subseteq\partial\calB$ and $C\subseteq\calB^\circ$.
Then $\bfz$ is a vertex of $(\calB^q, \calA_{\bbP}^q)$ if and only if:
\begin{enumerate}
    \item $B$ can be written as the union of corner trajectory segments and rigid cycles, and 
    \item $C$ consists of crossing points of augmentations of these corner trajectory segments and rigid cycles (which may include self-crossing points).
\end{enumerate}
\end{theorem}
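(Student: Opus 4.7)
By Lemma~\ref{lem:fullrankvertex}, it suffices to show that $\bfz$ has full rank if and only if conditions (1) and (2) hold. Iterating Proposition~\ref{prop:duplicatepoints} to collapse repeated entries, I may reduce to the case where $\bfz_1, \ldots, \bfz_q$ are distinct, so $|S| = q$.

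For the backward implication, assume (1) and (2) and accumulate a linearly independent defining system for $\bfz$ in stages. Each corner trajectory has full rank by Lemma~\ref{lemma:cornertrajectoryrank} and each rigid cycle has full rank by Definition~\ref{def:cyclicaltrajectory}, so Lemma~\ref{lemma:pointpartition} yields that $B$ has rank $2|B|$. I then adjoin each $\bfc \in C$ one at a time: being a crossing point of augmentations, $\bfc$ lies on a chord of $\calB$ parallel to $\bfm_1$ at least one of whose endpoints is a trajectory vertex $\bfa_1 \in B$ (augmentation only extends a trajectory by one boundary point on each end), and similarly on a chord parallel to $\bfm_2$ meeting some $\bfa_2 \in B$. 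The two new attack equations $\bfc \sim_1 \bfa_1$ and $\bfc \sim_2 \bfa_2$ involve the previously-unused coordinate $\bfc$ and have normals pointing in the nonparallel directions $(d_1, -c_1)$ and $(d_2, -c_2)$, so together they add rank $2$. Iterating over all of $C$ gives a linearly independent defining system of rank $2|B| + 2|C| = 2q$.

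For the forward implication I induct on $|C|$. When $|C| = 0$, $S = B$ and the conclusion is Lemma~\ref{lemma:pointpartition} combined with Proposition~\ref{prop:cornertrajectoryrank}. For the inductive step pick $\bfz_i \in C$, and first show $\bfz_i$ has an attack partner of each type in $\calH(\bfz)$: if no type 2 partner existed, the perturbation $\delta$ with $\delta_i = \bfm_1$ and $\delta_k = 0$ for $k \neq i$ would preserve every equation in $\calH(\bfz)$---type 1 attacks on $\bfz_i$ remain satisfied because shifting $\bfz_i$ by $\bfm_1$ keeps the differences parallel to $\bfm_1$, type 2 attacks on $\bfz_i$ do not exist by assumption, fixations on $\bfz_i$ do not exist because $\bfz_i \in \calB^\circ$, and equations not involving $\bfz_i$ are untouched---so $\delta$ would be a nonzero null-space vector, contradicting full rank. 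Choosing partners $\bfz_{j_1}, \bfz_{j_2}$ with $\bfz_i \sim_r \bfz_{j_r}$, let $\bfz'$ be $\bfz$ with $\bfz_i$ removed. I claim $\bfz'$ has full rank: any $\delta'$ in the null space of $\calH(\bfz')$ extends to $\delta \in \bbR^{2q}$ by setting $\delta_i = \alpha \bfm_1 + \beta \bfm_2$, where $\alpha$ and $\beta$ are uniquely determined---and consistently so, because any two type-$r$ partners of $\bfz_i$ are $\sim_r$-related by an equation already in $\calH(\bfz')$---so as to preserve the type 2 and type 1 attacks on $\bfz_i$ respectively. Full rank of $\bfz$ then forces $\delta = 0$, hence $\delta' = 0$.

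Applying the inductive hypothesis to $\bfz'$ yields the required decomposition of $B$ and the crossing-point description of $C \setminus \{\bfz_i\}$. To place $\bfz_i$, observe that the chord through $\bfz_{j_1}$ parallel to $\bfm_1$ is a segment of the augmentation of the trajectory containing $\bfz_{j_1}$: if $\bfz_{j_1} \in B$ this is the step $\bfz_{j_1} \to s_1 \bfz_{j_1}$ in the (possibly augmented) trajectory, while if $\bfz_{j_1} \in C$ the inductive hypothesis has already identified this chord as an augmentation segment through $\bfz_{j_1}$. The analogous statement for $\bfm_2$ holds via $\bfz_{j_2}$, so $\bfz_i$ lies on two augmentation segments, hence is a crossing point. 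The main obstacle I foresee is verifying the null-space extension in the inductive step: establishing that $\alpha$ and $\beta$ are determined consistently across all attack partners of $\bfz_i$ in $\calH(\bfz)$ requires care, and relies on transitivity of $\sim_r$ together with the presence of the auxiliary pairwise attack equations among those partners in $\calH(\bfz')$. A secondary bookkeeping matter is the case split $\bfz_{j_r} \in B$ versus $\bfz_{j_r} \in C$ when attributing $\bfz_i$ to a specific augmented trajectory.
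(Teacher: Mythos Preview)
Your proof is correct and shares the paper's overall architecture: reduce to full rank via Lemma~\ref{lem:fullrankvertex} and Proposition~\ref{prop:duplicatepoints}, induct on $|C|$, and handle the base case $|C|=0$ through Lemma~\ref{lemma:pointpartition} and Proposition~\ref{prop:cornertrajectoryrank}. The backward direction is also the same idea, just phrased as an iterative build-up rather than a formal induction.

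The one genuine difference is in the forward inductive step. The paper selects a linearly independent $\calH\subseteq\calH(\bfz)$ of size $2q$, replaces any surplus attack equations on $\bfz_q$ using transitivity so that exactly one of each type remains, and then deletes those two to obtain a rank-$2(q-1)$ system for $\bfz'$. You instead argue dually via the kernel: a perturbation in the $\bfm_1$-direction shows a type-$2$ partner must exist (and symmetrically), and then any null vector for $\calH(\bfz')$ extends to one for $\calH(\bfz)$ by choosing $\delta_i$ in the basis $\{\bfm_1,\bfm_2\}$, with consistency across multiple partners guaranteed by the transitivity relations already present in $\calH(\bfz')$. Both routes accomplish the same rank reduction; the paper's is slightly quicker bookkeeping, while yours makes the existence of both attack types explicit without appealing to Lemma~\ref{lem:twoequationprop} and handles all attack partners of $\bfz_i$ uniformly rather than first pruning down to one of each type.
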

\begin{proof}
By Lemma~\ref{lem:fullrankvertex} and Proposition~\ref{prop:duplicatepoints}, $\bfz$ is a vertex of $(\calB^q, \calA_{\bbP}^q)$ if and only if $S$ has full rank. We proceed by induction on the size of $C$. When $|C|=0$, $S=B$; Proposition~\ref{prop:cornertrajectoryrank} and Lemma~\ref{lemma:pointpartition} show that $S$ has full rank if and only if $S$ can be decomposed into corner trajectory segments and rigid cycles.

Now let $|C|=k>0$.  Suppose $\bfz$ has full rank and let $\calH\subseteq\calH(\bfz)$ be a set of $2q$ equations whose unique intersection point is $\bfz$.  Up to index reordering, we can choose $\bfz_q \in \calB^\circ$ and therefore $\bfz_q$ is not involved in any fixations. Furthermore we can assume $\calH$ contains exactly one attack equation involving $\bfz_q$ of each type, say $\bfz_q \sim_1 \bfz_i$ and $\bfz_q \sim_2 \bfz_j$ for $1\leq i,j\leq q-1$. (If there were more than one, we could replace an equation of the form $\bfz_q\sim_1\bfz_k$ by $\bfz_i\sim_1\bfz_k\in\calH(\bfz)$.) The removal of these two equations from $\calH$ gives $2q-2$ linearly independent equations involving $\bfz_1$ through $\bfz_{q-1}$, so $\bfz'=(\bfz_1,\ldots,\bfz_{q-1})$ is a vertex of $\big(\calB^{q-1}, \calA_{\bbP}^{q-1}\big)$.  

By induction, the set $S'=\{\bfz_i\}_{1 \leq i \leq q-1}$ can be partitioned into the sets $B'=B\subseteq\partial\calB$ and $C'\subseteq\calB^\circ$ which satisfy conditions (1) and (2). 
Every $\bfz_k\in C'$ is the crossing point of trajectory segments involving points of $B$, so is related by attacking equations of both types to points of $B$. Since $\bfz_q \sim_1 \bfz_i$ and $\bfz_q \sim_2 \bfz_j$, then by transitivity of $\sim_r$, $\bfz_q$ is a crossing point of augmentations of trajectory segments involving points of $B$.  (The need for augmentations of trajectory segments $T=[\bfb_1,\ldots,\bfb_l]$ arises because the crossing point may lie along the line segment leaving $\bfb_1$ toward $\bfb_0$ or along the line segment leaving $\bfb_{l}$ toward $\bfb_{l+1}$.) This completes the proof in the forward direction.

Now, suppose the elements of $C$ are all crossing points of augmentations of the corner trajectory segments and rigid cycles of $\calT(B)$. By the inductive hypothesis, $\bfz' = (\bfz_1, \ldots, \bfz_{q - 1})$ has full rank. Let $\calH' \subseteq \calH(\bfz)$ be a set of hyperplanes with rank $2(q - 1)$, whose intersection is $\bfz'$. 

Since $\bfz_q$ is a crossing point of two of the augmentations of trajectory segments making up $B$, it is linked by two attack equations of different types to points in $B$. Since the moves of $\bbP$ are linearly independent, $\calH'$ with these two attack equations appended has rank $2q$, and $\bfz$ is the intersection point of these hyperplanes. Therefore, $\bfz$ has full rank.
\end{proof}

Now that we know the vertices of $(\calB^q, \calA_{\bbP}^q)$, we can find its denominator.

\begin{theorem}\label{thm:denominator}
The denominator of $(\calB^q, \calA_{\bbP}^q)$ is equal to the least common multiple of the denominators of
\begin{enumerate}
    \item Points on rigid cycles of length at most $q$,
    \item Points on corner trajectory segments of length at most $q$ that start at corners,
    \item Self-crossing points of augmentations of corner trajectory segments or rigid cycles of length at most $q-1$, and 
    \item Crossing points of augmentations of two distinct corner trajectory segments or rigid cycles whose lengths sum to at most $q - 1$.
\end{enumerate}
\end{theorem}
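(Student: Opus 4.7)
The plan is to apply Theorem~\ref{thm:vertices} and Proposition~\ref{prop:duplicatepoints} to reduce the computation of $D(\calB^q, \calA_\bbP^q)$ to an enumeration of admissible coordinate points. Since the denominator of a vertex $\bfz$ depends only on the underlying set $S$ of its distinct coordinate pairs, I would first observe that $D(\calB^q, \calA_\bbP^q)$ equals the lcm of $\Delta(\bfp)$ as $\bfp$ ranges over all points in $\calB$ that appear as a coordinate of some vertex. The theorem then reduces to checking that such points are exactly those appearing in one of the four enumerated cases.

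For the forward direction, I would fix a vertex $\bfz$ and decompose its distinct coordinates as $S = B \sqcup C$ in the sense of Theorem~\ref{thm:vertices}, with $|S| \leq q$. A boundary point $\bfp \in B$ lies on some rigid cycle in $B$ (giving case (1)) or on some corner trajectory in $B$ of length at most $|B| \leq q$; if that corner trajectory does not already begin at a corner, I would use that $s_1, s_2$ are involutions to re-root it at the corner it contains, obtaining two corner trajectories starting at the corner whose union contains $\bfp$ (giving case (2)). An interior point $\bfp \in C$ is a crossing point of augmentations of either one trajectory or two distinct trajectories in $B$; since $|C| \geq 1$ we have $|B| \leq q - 1$, placing $\bfp$ in case (3) or (4). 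For the reverse direction, given any point $\bfp$ in cases (1)--(4), I would exhibit a vertex of $(\calB^q, \calA_\bbP^q)$ that realizes $\bfp$: the relevant trajectory (and, for the interior cases, the crossing point itself) already yields a full-rank tuple by Lemma~\ref{lemma:cornertrajectoryrank}, the definition of a rigid cycle, and Theorem~\ref{thm:vertices}; Proposition~\ref{prop:duplicatepoints} then lets me pad to length $q$ by appending duplicates, producing a vertex via Lemma~\ref{lem:fullrankvertex}.

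The main obstacle will be the careful bookkeeping around augmentations: a crossing point of augmentations may lie on a flow segment that extends beyond the actual endpoints of the trajectories in $B$, so the ``phantom'' augmentation points never contribute to $|S|$. I would need to confirm that the length bounds $l \leq q - 1$ in (3) and $l_1 + l_2 \leq q - 1$ in (4) are precisely those forced by the constraint $|S| \leq q$, and that the re-rooting step in case (2) preserves the length bound (which it does, since each of the two pieces is no longer than the original corner trajectory). Once this accounting is pinned down, both containments follow directly from the tools already in place.
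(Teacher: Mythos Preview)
Your proposal is correct and follows essentially the same approach as the paper: both proofs reduce to identifying exactly which points of $\calB$ can appear as a coordinate of some vertex, invoke Theorem~\ref{thm:vertices} to decompose $S$ into corner trajectories, rigid cycles, and crossing points with the requisite length bounds, and then pad with duplicates via Proposition~\ref{prop:duplicatepoints} to realize each such point in a vertex of $(\calB^q,\calA_\bbP^q)$. Your re-rooting argument for case~(2) is the same maneuver the paper performs when it replaces an arbitrary corner trajectory by one that starts at the corner $\bfc$.
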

\begin{proof}
The denominator of $(\calB^q, \calA_{\bbP}^q)$ is the least common multiple of the denominator of all vertices $\bfz = (\bfz_1, \bfz_2, \ldots, \bfz_q)$ of $(\calB^q, \calA_{\bbP}^q)$.  We must determine the set of all points that may occur as a component of some vertex.  

Theorem~\ref{thm:vertices} says that the set of points $S=\{\bfz_i\}$ can be partitioned into corner trajectory segments, rigid cycles, and crossing points of augmentations of these trajectory segments.  We first consider points on corner trajectory segments and rigid cycles.  Points on rigid cycles $[\bfb_1,\ldots,\bfb_l]$ of length $l\leq q$ will occur as components of the vertex $(\bfb_1,\ldots,\bfb_l,\bfb_l,\ldots,\bfb_l)\in(\partial\calB)^q$.  Points on corner trajectory segments $[\bfb_1,\ldots,\bfb_l]$ that include the corner $\bfc$ will occur as components of a vertex $(\bfb_1',\ldots,\bfb_q')\in(\partial\calB)^q$ where $T=(\bfb_1',\ldots,\bfb_l')$ is a trajectory segment starting at $\bfb_1'=\bfc$ and continues until $l=q$ or until it stops.  (If $l<q$, we pad our vertex with repeated points $\bfb_{l+1}'=\cdots=\bfb_q'=\bfc$.) 

A point $\bfc$ that occurs as a self-crossing point of an augmentation of some trajectory segment $T=[\bfb_1,\ldots,\bfb_l]$ occurs as a vertex $(\bfb_1,\ldots,\bfb_l,\bfc,\ldots,\bfc)$ if and only if $l\leq q-1$, and a point $\bfc$ that occurs as a crossing point of augmentations of trajectory segments $T_a=[\bfa_1,\ldots,\bfa_k]$ and $T_b=[\bfb_1,\ldots,\bfb_l]$ occurs as a vertex $(\bfa_1,\ldots,\bfa_k,\bfb_1,\ldots,\bfb_l,\bfc,\ldots,\bfc)$ if and only if $k+l\leq q-1$.
\end{proof}

\begin{corollary}
If Conjecture~\ref{conj:period} is true, the period of the counting quasipolynomial $u_\bbP(q;n)$ on the square board is equal to the least common multiple of the denominators of
\begin{enumerate}
    \item Points on rigid cycles of length at most $q$,
    \item Points on corner trajectory segments of length at most $q$ that start at corners,
    \item Self-crossing points of augmentations of corner trajectory segments or rigid cycles of length at most $q-1$, and 
    \item Crossing points of augmentations of two distinct corner trajectory segments or rigid cycles whose lengths sum to at most $q - 1$.
\end{enumerate}
\end{corollary}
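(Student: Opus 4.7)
The proof plan here is almost entirely bookkeeping: the corollary follows by chaining Theorem~\ref{thm:denominator} with Conjecture~\ref{conj:period}, specialized to the square board $\calB = [0,1]^2$. First, I would invoke Conjecture~\ref{conj:period} (assumed true by hypothesis) to replace the period of $u_\bbP(q;n)$ with the denominator $D([0,1]^{2q}, \calA_\bbP^q)$ of the inside-out polytope. Then I would apply Theorem~\ref{thm:denominator} in the special case $\calB = [0,1]^2$, which expresses this denominator as exactly the least common multiple of the denominators of the four enumerated families of points.

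Concretely, the steps are: (i) note that $[0,1]^2$ is indeed a bounded convex polygon with rational corners, so Theorem~\ref{thm:denominator} applies with $\calB = [0,1]^2$; (ii) by that theorem, $D([0,1]^{2q}, \calA_\bbP^q)$ equals the lcm of denominators of points of the four prescribed types (points on rigid cycles of length at most $q$; points on corner trajectories of length at most $q$ starting at a corner; self-crossing points of augmentations of corner trajectories or rigid cycles of length at most $q-1$; and crossing points of augmentations of two distinct corner trajectories or rigid cycles whose lengths sum to at most $q-1$); (iii) under the hypothesis that Conjecture~\ref{conj:period} holds, the period of $u_\bbP(q;n)$ equals this same quantity, yielding the stated equality.

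There is no genuine obstacle: no new geometric or combinatorial fact is required beyond what Theorem~\ref{thm:denominator} already provides, and the role of the conjecture is only to transport the identification from the denominator to the period. The proof I would write is therefore a single short paragraph, essentially: \emph{By Conjecture~\ref{conj:period}, the period of $u_\bbP(q;n)$ on the square board equals $D([0,1]^{2q}, \calA_\bbP^q)$, which by Theorem~\ref{thm:denominator} (applied to $\calB = [0,1]^2$) equals the least common multiple displayed in the statement.}
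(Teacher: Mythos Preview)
Your proposal is correct and matches the paper's approach: the corollary is stated immediately after Theorem~\ref{thm:denominator} with no proof, precisely because it is the direct combination of that theorem (specialized to $\calB=[0,1]^2$) with Conjecture~\ref{conj:period}. Your single-paragraph argument is exactly what is intended.
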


Theorem~\ref{thm:denominator} allows us to give a new and simpler proof of the main result from \cite{qqs6}.

\begin{corollary}
For $q\geq 3$, the period of the counting quasipolynomial of the bishop on the square board is $2$.
\end{corollary}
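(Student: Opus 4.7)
The plan is to apply Theorem~\ref{thm:denominator} to the bishop with moves $\bfm_1=(1,1)$ and $\bfm_2=(1,-1)$ on the square board $\calB=[0,1]^2$, computing each of the four contributions to $D([0,1]^{2q},\calA_\bbP^q)$. Example~\ref{ex:bishop} already rules out rigid cycles, eliminating item~(1). For items~(2) and (3), I would observe that at every corner only the inward diagonal direction lies in $\calM$, and that at the opposite corner neither remaining direction of $\calM$ re-enters $\calB$, so the only corner trajectories are the two diagonals $[(0,0),(1,1)]$ and $[(1,0),(0,1)]$ together with their length-$1$ sub-trajectories. Every point on these trajectories is a corner with integer coordinates, and each trajectory is a single line segment, so there are no self-crossings.

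The nontrivial contribution is item~(4). The two diagonals cross at $(1/2,1/2)$, whose coordinates have denominator~$2$. Treating each individual corner as a length-$1$ corner trajectory (permitted since Lemma~\ref{lemma:cornertrajectoryrank} explicitly handles $l=1$) whose augmentation is the full corresponding diagonal, the point $(1/2,1/2)$ is a crossing point of the augmentations of the two distinct length-$1$ corner trajectories $[(0,0)]$ and $[(1,0)]$. Their lengths sum to $1+1=2\leq q-1$ precisely when $q\geq 3$. Combining all four items yields $D([0,1]^{2q},\calA_\bbP^q)=2$ for every $q\geq 3$, and by the standard fact that the period of the counting quasipolynomial divides the denominator, the period of $u_\bbP(q;n)$ is at most~$2$.

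The main obstacle is the lower bound: Theorem~\ref{thm:denominator} by itself does not preclude period~$1$. To rule this out I would exhibit an explicit non-integer vertex. By Theorem~\ref{thm:vertices}, the point $\bfz=((0,0),(1,0),(1/2,1/2))\in([0,1]^2)^3$ is a vertex of $([0,1]^6,\calA_\bbP^3)$, since $B=\{(0,0),(1,0)\}$ decomposes as the union of two length-$1$ corner trajectories and $C=\{(1/2,1/2)\}$ is their augmentations' unique crossing; a quick rank check (or Proposition~\ref{prop:duplicatepoints} applied iteratively) then produces analogous non-integer vertices for every $q\geq 3$ by padding with repeated copies of $(1/2,1/2)$. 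A short direct count of $u_\bbP(3;n)$ at two consecutive values of $n$ of opposite parity confirms that the counting formula genuinely changes between even and odd~$n$, which together with the upper bound of~$2$ forces the period to equal~$2$.
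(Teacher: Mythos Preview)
Your upper-bound argument matches the paper's: using Example~\ref{ex:bishop} and Theorem~\ref{thm:denominator} to show every component of every vertex is a corner or $(\tfrac12,\tfrac12)$, hence $D([0,1]^{2q},\calA_\bbP^q)=2$ for $q\ge 3$.

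The gap is in the lower bound. Your approach~(a), exhibiting a non-integer vertex, only re-establishes that the \emph{denominator} equals~$2$; it says nothing about the \emph{period}, since the period can in principle be strictly smaller than the denominator (this is precisely the content of the open Conjecture~\ref{conj:period}). Your approach~(b) has two problems. First, evaluating $u_\bbP(3;n)$ at one even and one odd value of~$n$ cannot distinguish the two constituent polynomials; you would need enough values to determine each degree-$6$ polynomial separately. Second, even if carried out correctly, this only settles the case $q=3$, not all $q\ge 3$. The paper closes this gap by invoking an external result, Lemma~3.3(III) of \cite{qqs3}, which asserts that the coefficient of $n^{2q-6}$ in $u_\bbP(q;n)$ has period~$2$ for every $q\ge 3$; this supplies the lower bound uniformly in~$q$.
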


\begin{proof}
The only corner trajectory segments are the diagonals of $\calB$, and there are no rigid cycles, as shown in Example~\ref{ex:bishop}.  This shows that every vertex $\bfz$ of $(\calB^q, \calA_{\bbP}^q)$ has $z_i$ equal to a corner of $\calB$ or $(\frac{1}{2},\frac{1}{2})$.  Therefore the denominator of the inside-out polytope is $2$, which the period of the counting quasipolynomial must divide  \cite[Theorem~4.1]{IOP}. Lemma~3.3(III) from \cite{qqs3} shows that the coefficient of $n^{2q-6}$ has period 2 for $q\geq 3$, which completes the proof.
\end{proof}

\section{Two-move riders on square boards}\label{sec:applications}

We now restrict to the square board $\calB=[0,1]^2$ and investigate the denominator $D([0, 1]^{2q}, \calA_{\bbP}^q)$ of the inside-out polytope for some two-move riders.  Our analysis is broken into cases depending on the signs and magnitudes of the slopes ${d_1}/{c_1}$ and ${d_2}/{c_2}$.  We will notate the open edges of $\calB$ counterclockwise by 
\[\textup{$E_1 = (0, 1) \times \{0\}$, $E_2 = \{1\} \times (0, 1)$, $E_3 = (0, 1) \times \{1\}$, and $E_4 = \{0\} \times (0, 1)$.}\]

\subsection{Slopes of the same sign}\label{sec:case1} 

First consider a piece whose moves have slopes of the same sign.  The non-trivial trajectories converge to the fixed points of the dynamical system.  This proposition does not require the slopes to be rational.

\begin{proposition}
\label{prop:samesignconverge}
Let $\calB$ be the square board and let $\bbP$ have moves with real-valued slopes $m_1$ and $m_2$ of the same sign.  When the slopes are positive, the only periodic trajectories involve the the fixed points $(0,1)$ and $(1,0)$; whereas when the slopes are negative, the only periodic trajectories involve the fixed points $(0,0)$ and $(1,1)$. Every other trajectory $T=[\bfb_n]_{n\in\bbZ}$ is not periodic, with its points converging to the corresponding diametrically opposed fixed points as $n$ approaches $+\infty$ or $-\infty$.
\end{proposition}

\begin{proof}
Assume $0<m_1<m_2$.  The points $(0,1)$ and $(1,0)$ are fixed points of the system because the lines of slope $m_1$ and $m_2$ starting there do not intersect the interior of the square.   We show the trajectory set of every other point of $\partial\calB$ has infinite cardinality.
Define the sets $$Z_1 = E_1 \cup E_4 \cup \{(0, 0)\}
\textup{ and }
Z_2 = E_2 \cup E_3 \cup \{(1, 1)\}.$$
When $\bfb\in  Z_1$, both $s_1\bfb$ and $s_2\bfb$ are in $Z_2$ and $s_1\bfb$ is to the southeast of $s_2\bfb$; when $\bfb\in Z_2$, both $s_1\bfb$ and $s_2\bfb$ are in $Z_1$ and $s_1\bfb$ is to the northwest of $s_2\bfb$.  

Therefore, when $\bfb\in Z_1$, then $s_2\bfb\in Z_2$, so 
$s_1s_2\bfb$ is to the northwest of $s_2s_2\bfb=\bfb$ in $Z_1$, and hence $s_1s_2\bfb$ is closer to $(0, 1)$ than $\bfb$ is. This is the first of the following statements, all of which follow similarly.
\begin{gather*}
    \textup{When $\bfb \in Z_1$, 
    $0<|(0, 1) - s_1s_2\bfb| < |(0, 1) - \bfb|$  and 
    $0<|(1, 0) - s_2s_1\bfb| < |(1, 0) - \bfb|.$}\\  
    \textup{When $\bfb \in Z_2$, 
    $0<|(1, 0) - s_1s_2\bfb| < |(1, 0) - \bfb|$ and 
    $0<|(0, 1) - s_2s_1\bfb| < |(0, 1) - \bfb|$.}
\end{gather*}

We conclude that the trajectory $T$ is not periodic with one tail continuing northwest and one tail continuing southeast; we now show its points converge to $(1,0)$ or $(0,1)$.  When successive points alternate between neighboring sides, the distance to $(1,0)$ or $(0,1)$ along the same edge decreases geometrically.  The trajectory may first alternate between diametrically opposite sides, but in that case, the distance between consecutive points along the same edge is a positive constant, so the trajectory eventually begins to alternate between neighboring sides.

The negative slope case follows by symmetry.
\end{proof}

We now apply Theorem~\ref{thm:denominator} to find $D([0,1]^{2q}, \calA_{\bbP}^q)$ when $0 < m_1 < 1 < m_2$.  This restriction avoids a much more complicated formula that arises from the behavior of the crossing points in the general case.

\begin{theorem} \label{thm:inclineddenominator}
Suppose $\bbP$ has moves $\bfm_1=(c_1, d_1)$ and $\bfm_2=(c_2, d_2)$, satisfying $0 < \frac{d_1}{c_1} < 1 < \frac{d_2}{c_2}$. 
The denominator of $([0,1]^{2q}, \calA_{\bbP}^q)$ is the least common multiple of the denominators of the first $q$ terms of the following sequence defined for $i\geq 1$
\begin{equation}\label{eq:denominclined1}
    \begin{cases}
    (1,\big(\frac{d_1c_2}{c_1d_2}\big)^{\frac{i - 1}{2}}) & \textup{for $i$ odd}\\
    \Big(\frac{d_1}{c_1}\big(\frac{d_1c_2}{c_1d_2}\big)^{\frac{i}{2} - 1},\frac{c_2}{d_2}\big(\frac{d_1c_2}{c_1d_2}\big)^{\frac{i}{2} - 1}\Big) & \textup{for $i$ even}\\
\end{cases}
\end{equation}
and the denominators of the first $\lfloor {(q - 1)}/{2} \rfloor$ terms of the following sequence defined for $i\geq 1$
\begin{equation}\label{eq:denominclined2}
\begin{cases}
\big(\frac{d_1c_2}{c_1d_2}\big)^{\frac{i - 1}{2}}\left(\frac{c_2(d_1 - c_1)}{c_1d_2 - c_2d_1}, \frac{d_1(d_2 - c_2)}{c_1d_2 - c_2d_1}\right) &  \textup{for $i$ odd}\\
\big(\frac{d_1c_2}{c_1d_2}\big)^{\frac{i}{2}}\left(\frac{c_1(c_2-d_2)}{c_1d_2-c_2d_1},\frac{d_2(c_1-d_1)}{c_1d_2-c_2d_1}\right) & \textup{for $i$ even}
\end{cases}.
\end{equation}
\end{theorem}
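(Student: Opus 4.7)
The plan is to apply Theorem~\ref{thm:denominator} and compute each of its four ingredients: rigid cycles, corner trajectories, self-crossings of augmented trajectories, and cross-trajectory crossings. Proposition~\ref{prop:samesignconverge} immediately eliminates rigid cycles, since with both slopes positive every non-trivial extended trajectory is doubly-infinite and converges to the fixed points $(1,0)$ and $(0,1)$. I would then enumerate the four corner trajectories: $T_1$ and $T_2$ starting at $(1,1)$ in directions $-\bfm_2$ and $-\bfm_1$ respectively, and $T_3$ and $T_4$ starting at $(0,0)$ in directions $+\bfm_1$ and $+\bfm_2$ respectively. The hypothesis $0 < d_1/c_1 < 1 < d_2/c_2$ forces $T_1$ and $T_3$ to zigzag between $E_1$ and $E_2$ (converging to $(1,0)$), and $T_2, T_4$ between $E_3$ and $E_4$ (converging to $(0,1)$); the reflection $(x,y) \mapsto (1-x,1-y)$ maps $T_2$ onto $T_3$ and $T_4$ onto $T_1$, so by symmetry it suffices to analyze the pair $(T_1, T_3)$.

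Next I would iterate the antipode maps $s_1, s_2$ to obtain closed forms. Setting $r = d_1 c_2/(c_1 d_2) < 1$, the odd-indexed points of $T_1$ are $(1, r^k)$ and its even-indexed points are $(1 - r^k c_2/d_2, 0)$; the odd-indexed points of $T_3$ are $(1 - r^k, 0)$ and its even-indexed points are $(1, r^k d_1/c_1)$. Taking the LCM of denominators over the first $q$ points of $T_1$ and $T_3$ combined should reproduce the LCM of the first $q$ terms of sequence~\eqref{eq:denominclined1}: the odd-indexed sequence term $(1, r^{(i-1)/2})$ matches the corresponding odd-indexed point of $T_1$, while the even-indexed sequence term has denominator $\lcm(c_1, d_2) \cdot (c_1 d_2)^{i/2-1}$, matching the joint contribution of the even-indexed boundary points from $T_1$ and $T_3$ at level $k = i/2 - 1$.

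For self-crossings, the monotone convergence of Proposition~\ref{prop:samesignconverge} rules them out: same-slope segments of a single trajectory are parallel, while different-slope segments either share a boundary endpoint or lie in nested sub-regions of $\calB$ that accumulate at the fixed point. For cross-trajectory crossings I would focus on the pair $(T_1, T_3)$ and use the slope-parity alternation to reduce to odd-odd and even-even segment pairs. Solving for the intersection of $T_1$'s augmentation segment $2k+1$ with $T_3$'s augmentation segment $2l+1$ yields $x = 1 + c_2(d_1 r^l - c_1 r^k)/(c_1 d_2 - c_2 d_1)$, and the hard part will be showing by a sign/monotonicity argument using $r < 1$, $d_1 < c_1$, and $c_2 < d_2$ that this point lies in $[0,1]^2$ if and only if $k = l$. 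An analogous analysis handles even-even crossings and the symmetric pair $(T_2, T_4)$; the remaining trajectory pairs yield no interior crossings because the relevant segments are either parallel or meet only at shared corners. The surviving $k = l$ crossings reproduce the denominators of sequence~\eqref{eq:denominclined2}, and the bookkeeping constraint that the two trajectory lengths sum to at most $q - 1$ translates precisely to the cap of $\lfloor (q-1)/2 \rfloor$ terms. Assembling via Theorem~\ref{thm:denominator} gives the claimed formula.
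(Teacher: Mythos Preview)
Your proposal is correct and follows essentially the same route as the paper: invoke Proposition~\ref{prop:samesignconverge} to rule out rigid cycles and self-crossings, compute the four corner trajectories from $(0,0)$ and $(1,1)$ explicitly (the corners $(1,0)$ and $(0,1)$ being fixed points contributing nothing), reduce by the central symmetry $(x,y)\mapsto(1-x,1-y)$ to one pair, locate their crossing points, and feed everything into Theorem~\ref{thm:denominator}. The only notable difference is in how you justify that the augmented segments of $T_1$ and $T_3$ cross only when their indices match: you propose solving explicitly and checking membership in $[0,1]^2$ via sign/monotonicity inequalities, whereas the paper argues more combinatorially by observing that the boundary points of the two trajectories interleave along $E_1$ and $E_2$ in the order $\bfa_1,\bfb_2,\bfa_3,\bfb_4,\ldots$ and $\bfb_1,\bfa_2,\bfb_3,\bfa_4,\ldots$, which forces non-matching segments to be nested rather than crossing. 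Both arguments work; the paper's is shorter, yours is more mechanical but equally valid.
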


\begin{proof}
By Proposition~\ref{prop:samesignconverge}, the trajectory sets of points other than $(1,0)$ and $(0, 1)$ have infinitely many points, so there are no rigid cycles.  Trajectories that do not contain $(1,0)$ or $(0,1)$ also have no self-crossing points.   Therefore the denominator $D(\calB^q, \calA_{\bbP}^q)$ can be found by calculating the coordinates of all points on corner trajectory segments of length at most $q$ starting at $(0, 0)$ or $(1, 1)$, and crossing points of augmentations of the same whose lengths sum to at most $q - 1$.  

There are four corner trajectories starting at $(0, 0)$ or $(1,1)$. The trajectory segments $T_1$ and $T_2$ of the form $[\bfb_1, \bfb_2, \ldots, \bfb_q]$ starting at $\bfb_1 = (0, 0)$ with initial velocities $\bfm_1$ and $\bfm_2$ respectively have coordinates
\[ \bfb_i =  \begin{cases}
    \big(1 - \big(\frac{d_1c_2}{c_1d_2}\big)^{\frac{i - 1}{2}}, 0\big) & \textup{for $i$ odd} \\
    \big(1, \frac{d_1}{c_1}\big(\frac{d_1c_2}{c_1d_2}\big)^{\frac{i}{2} - 1}\big) & \textup{for $i$ even}
    \end{cases}\quad
    \textup{ and }\quad
    \bfb_i =  \begin{cases}
    \big(0, 1 - \big(\frac{d_1c_2}{c_1d_2}\big)^{\frac{i - 1}{2}}\big) & \textup{for $i$ odd} \\
    \big(\frac{c_2}{d_2}\big(\frac{d_1c_2}{c_1d_2}\big)^{\frac{i}{2} - 1}, 1\big) & \textup{for $i$ even}
    \end{cases}.
\]
The trajectory segments $T_3$ and $T_4$ of the form $[\bfb_1, \bfb_2, \ldots, \bfb_q]$ starting at $\bfb_1 = (1, 1)$ with initial velocities $\bfm_1$ and $\bfm_2$ respectively have coordinates 
\[ \bfb_i =  \begin{cases}
    \big(\big(\frac{d_1c_2}{c_1d_2}\big)^{\frac{i - 1}{2}}, 1\big) & \textup{for $i$ odd} \\
    \big(0,1 - \frac{d_1}{c_1}\big(\frac{d_1c_2}{c_1d_2}\big)^{\frac{i}{2} - 1}\big) & \textup{for $i$ even}
    \end{cases}\quad
\textup{ and }\quad 
\bfb_i =  \begin{cases}
    \big(1, \big(\frac{d_1c_2}{c_1d_2}\big)^{\frac{i - 1}{2}}\big) & \textup{for $i$ odd} \\
    \big(1 - \frac{c_2}{d_2}\big(\frac{d_1c_2}{c_1d_2}\big)^{\frac{i}{2} - 1}, 0\big) & \textup{for $i$ even}
    \end{cases}.
\]
An example of these trajectory segments is shown in Figure~\ref{fig:oppositesize1}.

\begin{figure}[htp]
\centering
\includegraphics[width=5cm]{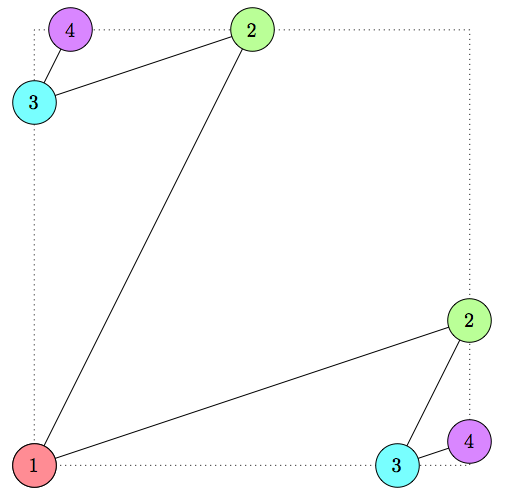}
\includegraphics[width=5cm]{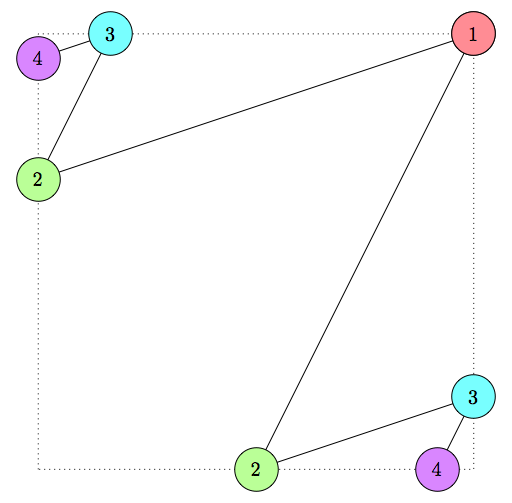}
\includegraphics[width=5cm]{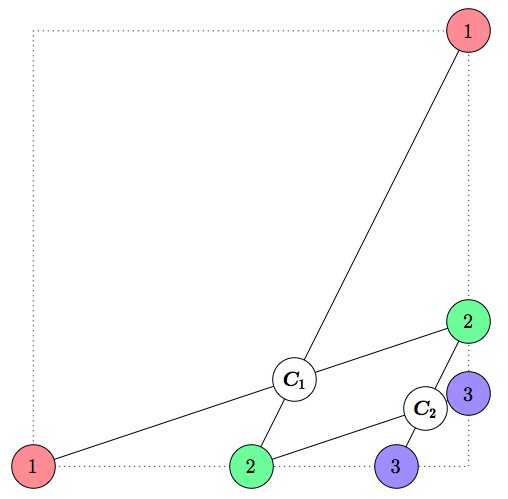}
\caption{For the piece with moves $(1, 2)$ and $(3, 1)$ we illustrate the four corner trajectory segments $T_1$, $T_2$, $T_3$, and $T_4$ starting at $(0,0)$ or $(1,1)$.  The right image shows two crossing points of $T_1$ and $T_4$.}
\label{fig:oppositesize1}
\end{figure}

We must now find all crossing points $\bfp \in \calB^{\circ}$.  We consider crossing points of $T_1$ and $T_4$---the other crossing points arise from a 180-degree rotation around $(\frac{1}{2}, \frac{1}{2})$ and have the same denominators.

Let $T_1 = [\bfa_1, \ldots, \bfa_k]$ and let $T_4 = [\bfb_1, \ldots, \bfb_l]$.  The points lying along $E_1$ starting at $(0,0)$ and moving eastward are $\bfa_1, \bfb_2,\bfa_3,\bfb_4,\ldots$ and the points lying along $E_2$ starting at $(1,1)$ and moving southward are $\bfb_1, \bfa_2,\bfb_3,\bfa_4,\ldots$.  Because line segments only have one of two slopes and because the points are connected in increasing order in the trajectory segment, the only crossing points of line segments from $\bfa_i$ and $\bfa_{i+1}$ and from $\bfb_j$ and $\bfb_{j+1}$ occur when $i=j$.

Solving $\bfp\sim_1\bfa_i$ and $\bfp\sim_2\bfb_i$ for $\bfp$ gives
$$\bfp=\begin{cases}
(1, 0) + \big(\frac{d_1c_2}{c_1d_2}\big)^{\frac{i - 1}{2}}\left(\frac{c_2(d_1 - c_1)}{c_1d_2 - c_2d_1}, \frac{d_1(d_2 - c_2)}{c_1d_2 - c_2d_1}\right) &  \textup{for $i$ odd}\\
(1, 0) + \big(\frac{d_1c_2}{c_1d_2}\big)^{\frac{i}{2}}\left(\frac{c_1(c_2-d_2)}{c_1d_2-c_2d_1},\frac{d_2(c_1-d_1)}{c_1d_2-c_2d_1}\right) & \textup{for $i$ even}
\end{cases},$$
which will be a crossing point when $i \leq \lfloor {(q - 1)}/{2} \rfloor$.  The result follows from Theorem~\ref{thm:denominator}.
\end{proof}

\begin{corollary} \label{cor:inclinednightriderdenominator}
Let $\calB$ be the square board, and let $\bbP$ be the inclined nightrider. Then the denominator of $(\calB^q, \calA_{\bbP}^q)$ is:
\[ \begin{cases}
    1 & q = 1 \\
    2 & q = 2 \\
    3 \cdot 2^{q - 1} & q \geq 3
    \end{cases}.
\]
\end{corollary}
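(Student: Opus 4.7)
The plan is to directly apply Theorem~\ref{thm:inclineddenominator} with the specific move vectors of the inclined nightrider. The inclined nightrider moves along lines of slope $1/2$ and $2$, so up to the ordering convention of the theorem we take $\bfm_1 = (c_1, d_1) = (2, 1)$ and $\bfm_2 = (c_2, d_2) = (1, 2)$; this satisfies $0 < \frac{d_1}{c_1} = \frac{1}{2} < 1 < 2 = \frac{d_2}{c_2}$, so the hypotheses of Theorem~\ref{thm:inclineddenominator} apply.

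The next step is to simplify the parameters that appear in the sequences \eqref{eq:denominclined1} and \eqref{eq:denominclined2}. I compute the key ratio $\frac{d_1 c_2}{c_1 d_2} = \frac{1}{4}$ and the determinant $c_1 d_2 - c_2 d_1 = 3$. Substituting into \eqref{eq:denominclined1}, the corner-trajectory points reduce, for odd and even indices $i$ respectively, to $(1,\, 4^{-(i-1)/2})$ and $(\tfrac{1}{2}\cdot 4^{-(i/2-1)},\, \tfrac{1}{2}\cdot 4^{-(i/2-1)})$, whose coordinate denominators are each $2^{i-1}$. Substituting into \eqref{eq:denominclined2} and using $\frac{c_2(d_1-c_1)}{3} = -\frac{1}{3}$, $\frac{d_1(d_2-c_2)}{3} = \frac{1}{3}$, $\frac{c_1(c_2-d_2)}{3} = -\frac{2}{3}$, $\frac{d_2(c_1-d_1)}{3} = \frac{2}{3}$, the $i$-th crossing point term has denominator $3 \cdot 2^{i-1}$.

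Having these two geometric progressions of denominators in hand, the corollary follows by taking the least common multiple of the first $q$ terms of the first sequence and the first $\lfloor (q-1)/2 \rfloor$ terms of the second. Since the denominators in each sequence grow by a factor of $2$, the LCMs are controlled by the largest term. Explicitly: for $q=1$ only the single term with denominator $1$ appears; for $q=2$, the corner-trajectory denominators are $\{1,2\}$ and no crossing points contribute (as $\lfloor 1/2\rfloor = 0$), giving $2$; for $q\geq 3$, the corner-trajectory LCM is $2^{q-1}$ and the crossing-point LCM is $3\cdot 2^{\lfloor (q-1)/2\rfloor - 1}$, so the overall LCM is $3 \cdot 2^{q-1}$, matching the claim.

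There is no genuine obstacle here — the argument is a direct specialization of Theorem~\ref{thm:inclineddenominator}, and the only care needed is to (i) correctly identify which move is $\bfm_1$ versus $\bfm_2$ so that the slope inequality in the theorem is satisfied, and (ii) keep track of the small-$q$ edge cases where $\lfloor(q-1)/2\rfloor = 0$ and no crossing points enter the LCM. Both are straightforward.
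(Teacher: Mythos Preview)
Your proof is correct and follows the same approach as the paper: a direct specialization of Theorem~\ref{thm:inclineddenominator} to the moves $(2,1)$ and $(1,2)$, yielding denominators $2^{i-1}$ for sequence~\eqref{eq:denominclined1} and $3\cdot 2^{i-1}$ for sequence~\eqref{eq:denominclined2}, then reading off the LCM. Your write-up is in fact more explicit than the paper's two-line version, carefully handling the small-$q$ edge cases and the choice of which move is $\bfm_1$.
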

\begin{proof}
For the inclined nightrider with moves $(1,2)$ and $(2,1)$, the denominators in Sequence~\eqref{eq:denominclined1} are $2^{i-1}$ and the denominators in Sequence~\eqref{eq:denominclined2} are $3\cdot 2^{i-1}$, so a factor of 3 will appear in the denominator for all $q\geq 3$. 
\end{proof}

If Conjecture~\ref{conj:period} is true, Corollary~\ref{cor:inclinednightriderdenominator} also provides the formula for the period of the counting quasipolynomial for inclined nightriders.

\subsection{(Some) Slopes of opposite signs}\label{sec:oppsigns}

We now investigate the dynamics of trajectories for a piece $\bbP$ with moves $(c_1, d_1)$ and $(c_2, d_2)$, where the slopes satisfy $0 < {d_1}/{c_1} < 1$, and ${d_2}/{c_2} < -1$.  (This is a generalization of the orthogonal nightrider.)  We let $c_1, d_1, d_2 > 0$ and $c_2 < 0$. In this dynamical system, trajectories converge to a single rigid cycle.  The general case when the moves are of opposite signs is presented as an open question in Section~\ref{sec:properties}.

We first consider real-valued slopes $m_1$ and $m_2$ satisfying $0 < m_1 < 1$ and $m_2 < -1$. The point $\bfb = \big(\frac{m_1 - 1}{m_1 + m_2}, 0\big) \in \partial \calB$ has trajectory set
\begin{equation}\label{eq:calO}
    \calO=\Big\{
    \Big(\frac{m_1 - 1}{m_1 + m_2}, 0\Big), 
    \Big(1, \frac{m_1(1 + m_2)}{m_1 + m_2}\Big), 
    \Big(\frac{1 + m_2}{m_1 + m_2}, 1\Big), 
    \Big(0, \frac{m_2(1 - m_1)}{m_1 + m_2}\Big)
    \Big\}.
\end{equation}
An example is shown in Figure~\ref{fig:convergingrigidcycle}.

The four points of $\calO$ form a rigid cycle because they are the solution to the system of equations 
\[\{\bfz_1\sim_1\bfz_2,~\bfz_2\sim_2\bfz_3,~\bfz_3\sim_1\bfz_4,~\bfz_4\sim_2\bfz_1,~y_1 = 0,~x_2 = 1,~y_3 = 1,~x_4 = 0\},\] which has full rank. In fact, $\calO$ is the only rigid cycle in the system and is an attractor for all other trajectories.

\begin{figure}[tbp] 
\centering
\includegraphics[width=5cm]{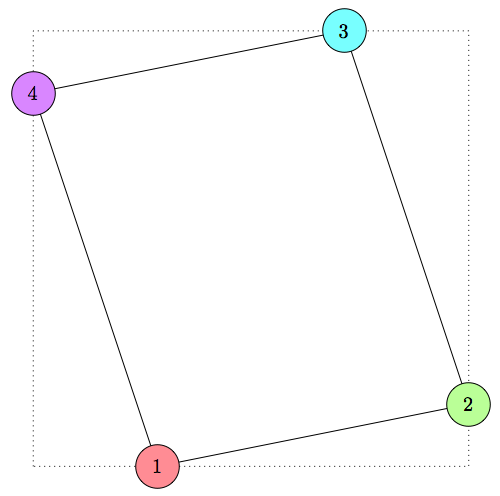}
\includegraphics[width=5cm]{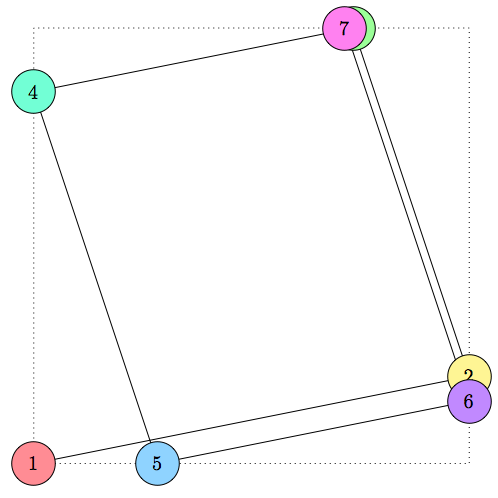}
\caption{The left image shows the rigid cycle $\calO$ in Equation~\eqref{eq:calO} for the piece with moves $(5, 1)$ and $(-1, 3)$.  The right image is a corner trajectory segment in the same system, whose points converge to $\calO$.}\label{fig:convergingrigidcycle}
\end{figure}

\begin{theorem}
\label{thm:orthogonaldynamics}
Let $\calB$ be the square board and let $\bbP$ have moves with real-valued slopes $m_1$ and $m_2$ satisfying $0 < m_1 < 1$ and $m_2 < -1$. The trajectory set $\calO$ in Equation~\eqref{eq:calO} is the only finite trajectory set in $\partial \calB$.  Further, suppose $T=[\bfb_n]_{n\in\bbZ}$ is an trajectory disjoint from $\calO$.  Then as $n$ both increases and decreases, $T$ either stops at a corner or converges to $\calO$. (In other words, $\calO$ is the $\omega$-limit set of $T$.)
\end{theorem}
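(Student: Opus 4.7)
The plan is to reduce the entire analysis to the Poincar\'e return map on a single edge, which will turn out to be an affine contraction whose fixed point corresponds to the point of $\calO$ on that edge. Under the slope conditions $0 < m_1 < 1$ and $m_2 < -1$, a case analysis of the actions of $s_1$ and $s_2$ on each edge reveals a \emph{standard cyclic pattern} in which trajectories visit the edges in the order $E_1 \to E_2 \to E_3 \to E_4 \to E_1$ with alternating maps $s_1, s_2, s_1, s_2$. First I would verify that this pattern is self-consistent: each $s_r$ sends points on the appropriate edge into the intended next edge, so once a trajectory enters the cycle it remains in the cycle.

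Parametrizing $E_1$ by its $x$-coordinate, I would then compute the return map $f = s_2 s_1 s_2 s_1 : E_1 \to E_1$ by composing four affine pieces. The calculation yields
\[
    f(x) \;=\; \Bigl(\tfrac{m_1}{m_2}\Bigr)^{\!2} x \;+\; \frac{(1-m_1)(m_1-m_2)}{m_2^2},
\]
a strict contraction with Lipschitz constant $(m_1/m_2)^2 < 1$. Its unique fixed point is $x^* = (m_1-1)/(m_1+m_2)$, matching the first coordinate of the distinguished point of $\calO$ on $E_1$. The uniqueness of $\calO$ as a finite orbit (any finite orbit must meet some edge and thereby induce a fixed point of a return map conjugate to $f$) and the geometric convergence of forward iterates to $\calO$ for trajectories in the standard pattern then follow from the Banach fixed-point theorem.

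For the bi-infinite claim, I would use the following dichotomy. Given an extended trajectory $T = [\bfb_n]$ disjoint from $\calO$, one of the two time directions corresponds to iterating the contraction $f$, and the tail of $T$ in that direction converges to $\calO$ geometrically. The opposite direction corresponds to iterating $f^{-1}$, an expansion with factor $(m_2/m_1)^2 > 1$; since the iterates are constrained to $[0,1]$ and the distance $|f^{-n}(x_0)-x^*|$ grows without bound, only finitely many backward iterates can remain in the standard cyclic pattern. Once an iterate escapes the valid subdomain, it is necessarily close to the boundary of $E_1$, hence close to a corner, and a bounded case analysis shows the trajectory must then terminate at a corner within finitely many further steps---any alternative behavior would require either a non-standard finite orbit (ruled out by the uniqueness of $\calO$) or indefinite iteration in a bounded region (ruled out by the expansion rate).

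The main obstacle is the final case analysis verifying that trajectories leaving the standard cycle in the expanding direction really do reach a corner. This requires tracking how $s_1$ and $s_2$ act piecewise near the endpoints of each edge, where the image can fall on either of two adjacent edges depending on the coordinate. The slope conditions $0 < m_1 < 1 < |m_2|$ make the branching tractable, but the non-standard edge-transition sequences must be enumerated carefully to ensure no alternative stable or recurrent behavior is overlooked.
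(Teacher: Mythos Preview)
Your forward-direction argument is essentially the paper's: both identify the counterclockwise cycle $E_1 \xrightarrow{s_1} E_2 \xrightarrow{s_2} E_3 \xrightarrow{s_1} E_4 \xrightarrow{s_2} E_1$, observe that each step is an affine contraction (the paper records the individual factors $m_1$ and $|1/m_2|$; you compose them into the return map with factor $(m_1/m_2)^2$), and deduce convergence to the unique fixed point, namely the $\calO$-point on that edge.

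The gap is in your treatment of the backward direction. You assert that once the expanding iterate $f^{-1}$ pushes the trajectory out of the standard cyclic pattern, ``a bounded case analysis shows the trajectory must then terminate at a corner within finitely many further steps,'' and you justify this by the dichotomy ``non-standard finite orbit (ruled out) or indefinite iteration in a bounded region (ruled out by the expansion rate).'' This dichotomy is false, and the conclusion is not what the theorem claims: the backward tail may \emph{also converge to $\calO$} rather than terminate. The mechanism you are missing is that when the expanding clockwise motion first overshoots---i.e., two successive backward points $\bfb_{-N+1}$ and $\bfb_{-N}$ lie on \emph{opposite} edges rather than clockwise-adjacent ones---the next backward step $\bfb_{-N-1}$ necessarily lands on the edge \emph{counterclockwise} from $\bfb_{-N}$. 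From that index onward the backward sequence is itself traversing the contracting counterclockwise cycle, and hence converges to $\calO$. Your expansion-rate argument only controls the dynamics while the clockwise pattern persists; it says nothing once the pattern breaks, and in particular does not preclude re-entry into a contracting regime. The ``bounded case analysis'' you defer to would, if carried out, reveal exactly this third outcome rather than the corner termination you expect.
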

\begin{proof} 
Restricting the antipode map $s_1$ to the domain $\overline{E_1}$ is a linear contraction $s_1\overline{E_1}\rightarrow \overline{E_2}$ with a factor of $m_1$ because $$|s_1(x_1, 0) - s_1(x_2, 0)| = |(1, m_1(1 - x_1)) - (1, m_1(1 - x_2))| = m_1|x_1 - x_2|.$$ Similarly,  $s_2: \overline{E_2} \rightarrow \overline{E_3}$ is a linear contraction with a factor of $\big|\frac{1}{m_2}\big|$, $s_1: \overline{E_3} \rightarrow \overline{E_4}$ is a linear contraction with a factor of $m_1$, and $s_2: \overline{E_4} \rightarrow \overline{E_1}$ is a linear contraction with a factor of $\big|\frac{1}{m_2}\big|$.  

For any point $\bfb_0\in \partial \calB\setminus\calO$, we investigate the trajectory $T=[\bfb_n]_{n\in\bbZ}$ where we choose $\bfb_1=\phi(\bfb_0)$ to be on the next side counterclockwise from $\bfb_0$.  (This is well defined because of the restrictions on $m_1$ and $m_2$.) By the above reasoning, this sequence continues along sides of $\calB$ in a counterclockwise manner as $n\row+\infty$.  Suppose $o$ is the element of $\calO$ on the same side of $\partial\calB$ as $\bfb_0$.  Then we know that $\phi^4(o)=o$ and \[\big|\phi^{4k}(\bfb_0)-o\big|=\frac{m_1^{2k}}{m_2^{2k}}|\bfb_0-o|.\] We conclude that $\bfb_n$ is defined for all $n\geq 0$ and $\calO$ is the $\omega$-limit set of $T$ as $n\rightarrow\infty$. This also ensures that $\calO$ is the only finite trajectory set.

On the other hand, if we apply $\phi^{-1}$ repeatedly to $\bfb_0$, the points visited can not indefinitely cycle among the sides of $\calB$ in a clockwise manner because each application of $\phi^{-1}$ is an expansion.  Therefore this sequence either stops at a corner, or two successive points $\bfb_{-N+1}$ and $\bfb_{-N}$ are on opposite edges of $\calB$.  When this occurs, $\bfb_{-N-1}$ is on the edge counterclockwise from $\bfb_{-N}$ and the sequence $[\bfb_{-n}]_{n\geq N}$ continues in a counterclockwise manner, which means that it is defined for all $n\geq N$ and $\calO$ is the $\omega$-limit set of $T$ as $n\rightarrow\infty$.
\end{proof}

We now compute $D([0, 1]^{2q}, \calA_{\bbP}^q)$ when $\bbP$ has orthogonal slopes of the form $(m, 1)$ and $(1, -m)$.  

\begin{theorem} \label{thm:orthogonal}
Let $\calB$ be the square board, and let $\bbP$ be the piece with moves $(m, 1)$ and $(1, -m)$. Then the denominator of $(\calB^q, \calA_{\bbP}^q)$ is:
\[ \begin{cases}
    1 & q = 1 \\
    m & q = 2 \\
   m^4+m^2 & q = 3 \\
\lcm(m^2+1, m+1)\cdot m^{q-1} & q \geq 4
    \end{cases}.
\]
\end{theorem}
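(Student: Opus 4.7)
The plan is to apply Theorem~\ref{thm:denominator}, which expresses $D([0,1]^{2q},\calA_\bbP^q)$ as the LCM over four classes of denominators: those of points on rigid cycles of length at most $q$, points on corner trajectories of length at most $q$ starting at corners, self-crossings of augmentations, and pairwise crossings of augmentations of trajectories whose lengths sum to at most $q-1$. By Theorem~\ref{thm:orthogonaldynamics} the only rigid cycle is the orbit $\calO$ of Equation~\eqref{eq:calO}, which for moves $(m,1),(1,-m)$ specializes to the four points
\[\bigl(\tfrac{1}{m+1},0\bigr),\;\bigl(1,\tfrac{1}{m+1}\bigr),\;\bigl(\tfrac{m}{m+1},1\bigr),\;\bigl(0,\tfrac{m}{m+1}\bigr),\]
each with coordinate denominator $m+1$.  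By the $90^\circ$ rotational symmetry $(x,y)\mapsto(1-y,x)$ of both $\calB$ and the move set $\{\pm(m,1),\pm(1,-m)\}$, the four corner trajectories are rotations of one another, so it suffices to analyze the one starting at $(0,0)$, call it $[\bfb_0,\bfb_1,\bfb_2,\ldots]$.

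Using the linear contraction factors from the proof of Theorem~\ref{thm:orthogonaldynamics}, I would prove by induction on $k$ that $\bfb_k$ lies on the same edge of $\calB$ as the nearest rigid-cycle point $\bfo^{(k)}\in\calO$, with signed displacement of magnitude $1/(m^k(m+1))$ along that edge.  Writing the nontrivial coordinate of $\bfb_k$ as $\tfrac{a}{m+1}\pm\tfrac{1}{m^k(m+1)}=\tfrac{am^k\pm1}{m^k(m+1)}$ and using the identity $m\equiv-1\pmod{m+1}$, one verifies the numerator $am^k\pm1$ is divisible by $m+1$ in each of the four edge cases, so the $(m+1)$ factor cancels and $\bfb_k$ has denominator exactly $m^k$ for $k\geq 1$.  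Therefore the LCM of denominators of the first $q$ corner-trajectory points is $m^{q-1}$.  The small cases now follow immediately: $q=1$ gives only corners with denominator $1$; $q=2$ adds $\bfb_1$ with denominator $m$; and $q=3$ also requires the interior crossing of first segments of two adjacent corner trajectories, which computes to $\bigl(\tfrac{m^2}{m^2+1},\tfrac{m}{m^2+1}\bigr)$ with denominator $m^2+1$, yielding $\lcm(m^2,m^2+1)=m^2(m^2+1)=m^4+m^2$.

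For $q\geq4$ the main technical step is enumerating all remaining interior crossings of augmentations.  Each segment has slope $1/m$ or $-m$, so pairs of parallel segments contribute nothing while each differently-sloped pair yields a unique crossing obtained from a $2\times 2$ linear system.  Writing each segment of a corner trajectory as a translate of the corresponding $\calO$-segment by the displacement of magnitude $1/(m^k(m+1))$ from the previous step, every crossing's coordinates are rational with denominators supported on the primes dividing $m(m+1)(m^2+1)$, and the gcd relations $\gcd(m,m+1)=\gcd(m,m^2+1)=1$ and $\gcd(m+1,m^2+1)\in\{1,2\}$ constrain the denominators further.  I expect the main obstacle to be verifying exhaustively that all pairwise crossings and self-crossings of augmentations yield denominators dividing $\lcm(m^2+1,m+1)\cdot m^{q-1}$, and that this bound is attained by some vertex---for instance, a vertex combining the length-$q$ corner trajectory, the rigid cycle $\calO$, and a length-$1$ pairwise crossing of two adjacent corner trajectories' first segments, so that the denominators $m^{q-1}$, $m+1$, and $m^2+1$ all appear together.
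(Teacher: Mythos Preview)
Your overall strategy matches the paper's: invoke Theorem~\ref{thm:denominator}, identify the unique rigid cycle $\calO$ with denominator $m+1$, compute corner-trajectory point denominators as powers of $m$, and then analyze crossing points. Your derivation of the corner-trajectory denominators via contraction toward $\calO$ together with the congruence $m\equiv -1\pmod{m+1}$ is essentially the paper's explicit coordinate formula viewed from a different angle, and yields the same conclusion that the $k$th point has denominator exactly $m^{k-1}$.

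The genuine gap is the crossing-point analysis, which you yourself flag as ``the main obstacle'' without resolving it. The paper's device here is much cleaner than the displacement-from-$\calO$ bookkeeping you sketch: any interior crossing point $\bfc=(x,y)$ lies on one line of each slope, and these lines can always be written in the form $x-my=r$ and $mx+y=s$, where $r$ and $s$ have denominators dividing those of the boundary endpoints of the corresponding segments. Solving this $2\times 2$ system gives
\[
x=\frac{r+ms}{m^2+1},\qquad y=\frac{s-mr}{m^2+1}.
\]
Since $r$ and $s$ are determined by points already appearing on corner trajectories or on $\calO$, their denominators are already accounted for in the LCM, and the only \emph{new} contribution a crossing point can introduce is the factor $m^2+1$. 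This single observation replaces the exhaustive case enumeration you anticipate and handles self-crossings, corner--corner crossings, and corner--rigid-cycle crossings uniformly. The lower bound for $q\geq 3$ then comes from the specific crossing $\bigl(\tfrac{m^2}{m^2+1},\tfrac{m}{m^2+1}\bigr)$ of the augmented one-point trajectories at $(0,0)$ and $(1,0)$, which you already identified.

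Your prime-support claim---that crossing-point denominators are ``supported on the primes dividing $m(m+1)(m^2+1)$''---is too weak on its own: it does not control exponents, so it cannot by itself yield the bound $\lcm(m^2+1,m+1)\cdot m^{q-1}$. The $2\times 2$ linear system is what gives exponent control.
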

\begin{proof}
For this piece $\bbP$, the rigid cycle 
\[\calO=\left\{\left({1}/(m+1),0\right),\left(1,{1}/(m+1)\right),\left({m}/(m+1),1\right),\left(0,m/(m+1)\right)\right\}\] contributes a denominator of $m+1$ when $q\geq 4$.  

Each corner is the start of one corner trajectory segment; by symmetry about $(\frac{1}{2},\frac{1}{2})$ the $k$-th point along every trajectory segment has the same denominator.  The trajectory segment $T = [\bfb_1, \bfb_2, \ldots]$ starting at $\bfb_1 = (0, 0)$ has coordinates
\[\bfb_k = \begin{cases}
    \big(0, \frac{m}{m+1}\big) - \frac{1}{m^{k - 1}}\big(0, \frac{1}{m+1}\big) & k \equiv 0 \bmod 4\\
    \big(\frac{1}{m+1}, 0\big) -\frac{1}{m^{k - 1}}\big(\frac{1}{m+1}, 0\big) & k \equiv 1 \bmod 4 \\
    \big(1, \frac{1}{m+1}\big) + \frac{1}{m^{k - 1}}\big(0, \frac{1}{m+1}\big) & k \equiv 2 \bmod 4 \\
    \big(\frac{m}{m+1}, 1\big) + \frac{1}{m^{k - 1}}\big(\frac{1}{m+1}, 0\big) & k \equiv 3 \bmod 4 \\
    \end{cases},
\]
whose denominator is $m^{k-1}$ for all $k$. (Notice, for example, that $m^{k - 1} - 1$ is divisible by $m+1$ for $k$ odd.)

We must also determine the denominators of crossing points of augmentations of trajectory segments and rigid cycles. 
The key insight is that every crossing point $\bfc = (x, y)$ lies on the lines $x - my = r$ and $mx + y = s$ for some rational numbers $r$ and $s$ whose denominators divide the smaller of the denominators of the two points on $\partial\calB$ that the lines intersect.  Solving these equations for $x$ and $y$ we see $x=(r+ms)/(m^2+1)$ and $y=(s-mr)/(m^2+1)$.  In essence, a crossing point of the augmentation of trajectory segments and rigid cycles can not contribute anything new to $([0,1]^{2q}, \calA_{\bbP}^q)$ other than $(m^2+1)$.  This contribution of $(m^2+1)$ will indeed occur when $q\geq 3$ because, for example, the augmentations of the one-point corner trajectory segments $T_a=[(0,0)]$ and $T_b=[(1,0)]$ have the crossing point $\bfc=\big(\frac{m^2}{m^2+1},\frac{m}{m^2+1}\big)$.
\end{proof}

\begin{remark}In the above formula the reader may find it useful to note that
\[\lcm(m^2+1,m+1)=
\begin{cases}
(m^2+1)(m+1) & \textup{if $m$ is even} \\
(m^2+1)(m+1)/2 & \textup{if $m$ is odd} \\
\end{cases}.\]
This is because $\lcm(m^2+1,m+1)=\lcm(m^2-m,m+1)$, and $(m-1)$, $m$, and $(m+1)$ only share a factor if $m$ is odd, for which the common factor is 2.
\end{remark}

The proof for the general case of pieces with orthogonal slopes $(c,d)$ and $(d,-c)$ can be approached similarly but the formula is not nearly as clean.  Theorem~\ref{thm:orthogonal} applies to the orthogonal nightrider with moves $(2, 1)$ and $(1, -2)$.
\begin{corollary}
  \label{cor:orthogonaldenominator}
 Let $\calB$ be the square board, and $\bbP$ be the orthogonal nightrider. Then the denominator of $(\calB^q, \calA_{\bbP}^q)$ is:
\[ \begin{cases}
    1 & q = 1 \\
    2 & q = 2 \\
    20 & q = 3 \\
    15 \cdot 2^{q - 1} & q \geq 4
    \end{cases}
\]
\end{corollary}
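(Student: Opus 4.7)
The plan is very short: the orthogonal nightrider has moves $(2,1)$ and $(1,-2)$, which fits the hypotheses of Theorem~\ref{thm:orthogonal} with $m = 2$. I would therefore proceed by direct substitution into the piecewise formula of Theorem~\ref{thm:orthogonal}.

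First I would verify that the slope conditions required for Theorem~\ref{thm:orthogonal} are satisfied, namely that the two moves are of the form $(m,1)$ and $(1,-m)$ with $m = 2 > 1$, so that $0 < d_1/c_1 = 1/2 < 1$ and $d_2/c_2 = -2 < -1$ (the hypotheses of Section~\ref{sec:oppsigns}). Then I would substitute $m=2$ into each of the four cases. For $q = 1$ and $q = 2$ the formulas give $1$ and $m = 2$ respectively. For $q = 3$ the formula gives $m^4 + m^2 = 16 + 4 = 20$. For $q \geq 4$ I would compute $\lcm(m^2+1,\,m+1) = \lcm(5,3) = 15$ (using the remark following Theorem~\ref{thm:orthogonal}, which since $m=2$ is even gives $(m^2+1)(m+1) = 5 \cdot 3 = 15$), so the denominator becomes $15 \cdot 2^{q-1}$.

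Since every step is a straightforward plug-in, there is no genuine obstacle. The only thing to double-check is that the moves of the orthogonal nightrider from Section~\ref{sec:background} are indeed $(2,1)$ and $(1,-2)$ (slopes $1/2$ and $-2$), matching the parametrization $(m,1)$ and $(1,-m)$ used in Theorem~\ref{thm:orthogonal}; this is immediate. The result then follows directly from Theorem~\ref{thm:orthogonal}.
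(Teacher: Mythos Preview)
Your proposal is correct and matches the paper's approach exactly: the paper simply states that Theorem~\ref{thm:orthogonal} applies to the orthogonal nightrider with moves $(2,1)$ and $(1,-2)$ and records the corollary without further argument, which is precisely the direct substitution $m=2$ you describe.
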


As before, this formula would also be the period of the counting quasipolynomial for orthogonal nightriders if Conjecture~\ref{conj:period} is true.

\subsection{Slopes that sum to zero}\label{sec:case3}

We analyze one more case---when the pieces $\bbP$ have moves $(c,d)$ and $(-c,d)$.  In this case, the dynamical system is identical to billiards on a square board.  

A key technique from polygonal billiards is the {\em unfolding} of a trajectory, where the polygon is reflected along edges that the trajectory encounters. (See, for example, Chapter 3 of \cite{tabachnikov}.)  Because the angle of incidence equals the angle of reflection, the trajectory lies along a single line in this unfolded path.  (A visualization is given in  Figure~\ref{fig:unfolding}.)  

\begin{figure}[tbp]
\centering
\includegraphics[width=3.84cm]{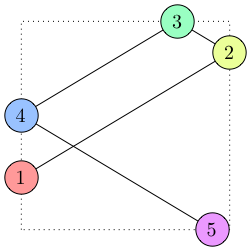}\quad
\includegraphics[width=10.24cm]{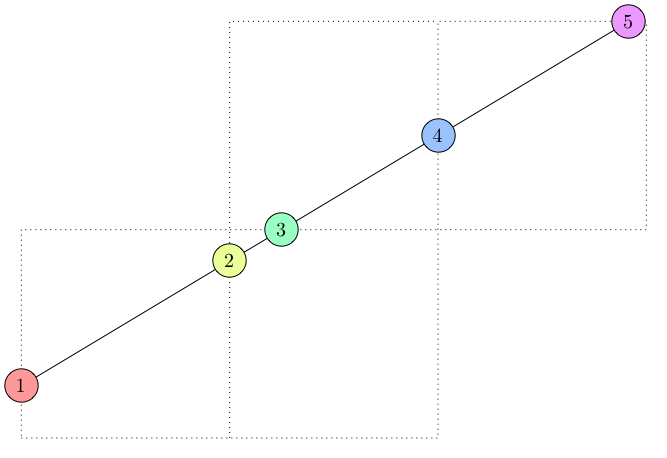}

\caption{The unfolding of a trajectory segment for the piece with moves $(5,3)$ and $(5,-3)$ starting at $\bfb_1=(0, \frac{1}{4})$. The trajectory segment on the left lies on the single line on the right when the unit square is reflected along the edges that are encountered.}
\label{fig:unfolding}
\end{figure}

\begin{proposition} \label{prop:oppsignsrigidcycles}
Let $\calB$ be the square board and let $\bbP$ have moves with with rational slopes $m_1$ and $m_2$ satisfying $m_2 = -m_1$. There are no rigid cycles.
\end{proposition}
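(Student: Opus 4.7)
As noted at the outset of Section~\ref{sec:case3}, when $m_2 = -m_1$ our dynamical system coincides with the classical billiard on $\calB = [0,1]^2$ with slopes $\pm m_1$. The plan is to exploit the well-known billiards phenomenon that closed orbits in a square come in continuous $1$-parameter families of parallel orbits; combined with Lemma~\ref{lem:multiplesolutions}, this will immediately rule out rigid cycles.

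More precisely, suppose $T = [\bfb_1,\ldots,\bfb_k]$ is a cyclical trajectory. I want to exhibit a nearby cyclical trajectory $T' = [\bfb_1',\ldots,\bfb_k']$ with $\calH(\bfz) = \calH(\bfz')$ for $\bfz = (\bfb_1,\ldots,\bfb_k)$ and $\bfz' = (\bfb_1',\ldots,\bfb_k')$, but with $\{\bfb_i'\}\neq \{\bfb_i\}$. Lemma~\ref{lem:multiplesolutions} then yields that $\bfz$ is not of full rank, so $T$ is not a rigid cycle.

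To build $T'$, I would use the unfolding technique illustrated in Figure~\ref{fig:unfolding}: reflecting $\calB$ across each edge the trajectory meets turns $T$ into a straight segment $\ell$ of slope $m_1$ in $\bbR^2$ joining $\bfb_1$ to a translated copy of itself after the cycle closes. Translating $\ell$ by a sufficiently small vector $\bfu$ parallel to the edge of $\calB$ containing $\bfb_1$ and then refolding produces the desired trajectory $T'$. For small enough $\bfu$, the same sequence of edges is visited in the same order and each attack type is preserved, so $\calH(\bfz)=\calH(\bfz')$; at the same time, $\bfu\neq 0$ guarantees $\{\bfb_i'\}\neq\{\bfb_i\}$.

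The main subtlety will be verifying that a translation of $\ell$ parallel to the starting edge really does yield a closed orbit upon refolding---that is, that the composition of reflections accumulated along the cycle acts trivially on $\bfu$. This should follow from the fact that a closed billiard orbit in the square must involve an even number of horizontal bounces and an even number of vertical bounces (each bounce flips one component of velocity, and the velocity must close up), so any translation along the starting edge survives the refolding intact. Continuity of $s_1$ and $s_2$ away from corners will then guarantee that sufficiently small $\bfu$ preserves the combinatorial type of the trajectory, completing the argument.
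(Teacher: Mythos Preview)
Your proposal is correct and follows essentially the same approach as the paper: both unfold the cyclical trajectory to a straight line, translate it slightly so that the sequence of grid lines crossed (hence the fixations) is unchanged, refold to obtain a distinct trajectory $T'$ with $\calH(\bfz')=\calH(\bfz)$, and invoke Lemma~\ref{lem:multiplesolutions}. The only cosmetic difference is that the paper translates by adding $\varepsilon$ to the $y$-intercept while you translate parallel to the starting edge; for a line of fixed slope these are equivalent. Your explicit remark that the refolded orbit closes because a periodic billiard orbit in the square has an even number of horizontal and of vertical reflections is a point the paper leaves implicit in asserting that $\calH(\bfz')=\calH(\bfz)$.
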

\begin{proof}
Section~3.1 of \cite{tabachnikov} shows that on the square board, the trajectory set of every point $\bfb\in\partial\calB$ is finite.  Therefore, trajectory segments that start at a corner must end at a corner and every other trajectory segment is cyclic.

Suppose $T = [\bfb_1, \ldots, \bfb_{l}]$ is a cyclic trajectory segment, with associated hyperplane arrangement $\calH=\calH(\bfb_1, \ldots, \bfb_{l})$. Unfold $T$ starting at $\bfb_1$ along the line $\ell$ defined by $y=m_1x+b$ for some $b\in\calR$.  The integral horizontal and vertical lines ($x=r$ and $y=s$ for integers $r$ and $s$) that the line passes through correspond to the fixations in $\calH$. Because $T$ contains no corners of $\calB$, $\ell$ does not pass through any points in the integer lattice, and therefore there is some $\varepsilon>0$ such that the line $\ell'$ defined by $y=m_1x+b+\varepsilon$ passes through the integral horizontal and vertical lines in the same order and correspond to the same fixations from $\calH$.  We conclude that the trajectory segment $T'$ created by refolding $\ell'$ has the same defining associated hyperplane arrangement as $T$, so $T$ is not a rigid cycle by Lemma~\ref{lem:multiplesolutions}.
\end{proof}

\begin{theorem} \label{thm:oppsignsdenominator}
Let $\calB$ be the square board and let $\bbP$ be the piece with moves $(c, d)$ and $(c, -d)$.  Then $(\calB^q, \calA_{\bbP}^q)$ has denominator
\[ \begin{cases}
    1 & q = 1 \\
    \hat{d} & q = 2 \\
    2\hat{d} & 3 \leq q \leq \big\lceil \hat{d}/\hat{c} \big\rceil \\
    2\hat{c}\hat{d} & q \geq \big\lceil \hat{d}/\hat{c} \big\rceil + 1
    \end{cases},
\]
where $\hat{c}=\min(|c|,|d|)$ and $\hat{d}=\max(|c|,|d|)$.
\end{theorem}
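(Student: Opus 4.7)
The plan is to apply Theorem~\ref{thm:denominator} after using Proposition~\ref{prop:oppsignsrigidcycles} to eliminate the rigid-cycle contribution. Via the symmetries of the square (including the reflection $y=x$, which swaps $c$ and $d$) and sign changes in the moves, I may assume $0<c\leq d$ with $\gcd(c,d)=1$, so $\hat c=c$ and $\hat d=d$. The corner trajectory from $(0,0)$ is analyzed using the billiard unfolding from Section~\ref{sec:case3}: it lifts to the segment from $(0,0)$ to $(c,d)$ along $y=(d/c)x$, and folding back yields $c+d$ distinct points, with $x$-coordinates of the form $r/d$ on horizontal edges and $y$-coordinates of the form $s/c$ on vertical edges. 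In particular every corner-trajectory point has denominator dividing $d=\hat d$. Moreover each line-segment piece of any corner trajectory lies on a line $y=\pm(d/c)x+N/c$ with $N\in\bbZ$, so two such segments of opposite slopes with intercept parameters $A/c$ and $B/c$ meet at $\bigl((B-A)/(2d),\,(A+B)/(2c)\bigr)$; every crossing therefore has $x$-denominator dividing $2d$, $y$-denominator dividing $2c$, and total denominator dividing $\lcm(2c,2d)=2cd$.

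The cases $q=1$ and $q=2$ are immediate: only corners contribute at $q=1$ (denominator $1$), while at $q=2$ no crossings are possible (two distinct trajectories would require lengths summing to at most $1$) and the first bounce $(c/d,1)$ already yields denominator $\hat d$. For $q=3$ the augmentations of $[(0,0)]$ and $[(0,1)]$ cross at $(c/(2d),\,1/2)$, whose lcm denominator is $2\hat d$ in all cases since $\gcd(c,2d)$ divides $2$. For $q\geq\lceil d/c\rceil+1$ I will exhibit a crossing of denominator $2cd$ by pairing the first segment of the trajectory from $(0,0)$ (intercept $0$) with a suitably chosen segment from the augmentation of a length-$(\lceil d/c\rceil-1)$ corner trajectory starting at an adjacent corner; that augmented segment has intercept parameter $N/c$ with $\gcd(N,c)=1$, the line intersection lies in $\calB^{\circ}$ for the appropriate starting corner, and the resulting crossing has $y$-denominator $2c$. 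Together with the $2d$ contribution from $q=3$ this produces lcm denominator $2cd$.

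For the intermediate range $3\leq q\leq\lceil d/c\rceil$ (nonempty exactly when $d>2c$), I must show no crossing raises the denominator above $2d$. Trajectories of length at most $\lceil d/c\rceil$ visit only horizontal edges---the first vertical-edge bounce of any corner trajectory occurs at position $\lceil d/c\rceil+1$---so every relevant line segment connects two horizontal edges with endpoints $(r/d,\{0,1\})$ whose $r$-values differ by $\pm c$. A case analysis on the pair of starting corners shows that either the intercept parameters satisfy $c\mid A+B$ (so the $y$-denominator divides $2$), or the formal line intersection has $y$-coordinate $d/(2c)$ or $1-d/(2c)$, which fall outside $[0,1]$ precisely when $d>2c$ and hence are not crossings in $\calB^{\circ}$. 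The main obstacle is this last congruence-plus-geometry verification; once it is in place, together with the explicit construction above, all four cases of the theorem follow from Theorem~\ref{thm:denominator}.
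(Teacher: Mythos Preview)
Your plan is essentially the paper's own proof: reduce by symmetry, invoke Proposition~\ref{prop:oppsignsrigidcycles} to discard rigid cycles, unfold corner trajectories onto the line $y=(d/c)x$ to read off denominators, bound all crossing denominators by $2cd$ via the intercept formula, exhibit the $(c/(2d),1/2)$ crossing at $q=3$, and finish with a corner-by-corner case analysis for the intermediate range. One slip to fix: the sentence ``every corner-trajectory point has denominator dividing $d=\hat d$'' is false as stated (vertical-edge bounces have denominator dividing $c$, which need not divide $d$); you correctly track this later when you note the first vertical-edge bounce occurs at position $\lceil d/c\rceil+1$, so just delete or qualify that clause. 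Your route to the factor $c$ at $q\geq\lceil d/c\rceil+1$ via a constructed crossing is slightly more elaborate than the paper's, which simply reads $c$ off the first vertical-edge corner-trajectory point and combines it with the $2d$ already in hand.
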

\begin{proof}
By symmetry, we only need to consider the case $0<c<d$. 

Without rigid cycles, the denominator of $(\calB^q, \calA_{\bbP}^q)$ only depends on corner trajectory segments and the crossing points of their augmentations. Let $T = [\bfb_1, \ldots, \bfb_k]$ be the corner trajectory segment starting at $\bfb_1 = (0, 0)$.  Unfold $T$ to lie on the line $\ell$ of slope $d/c$ through $(0,0)$.  For $1\leq i\leq k$, notate the image of $\bfb_i$ under this unfolding to be $\bfb_i'$; observe that $\bfb_i$ and $\bfb_i'$ have the same denominator.  This denominator will either be $c$ or $d$ depending on whether $\ell$ is intersecting a line of the form $x=r$ (for which $\bfb_i'=(r, \frac{dr}{c})$) or a line of the form $y=s$ (for which $\bfb_i'=(\frac{cs}{d},s)$). The denominators of $\bfb_i'$ will all be $d$ until $\ell$ meets the line $x=1$.  Therefore the contribution to the denominator from corner trajectory segments is $1$ if $q=1$, $d$ if $1< q \leq  \lceil d/c \rceil$ and $cd$ when $q>\lceil d/c \rceil$.

We must also determine the relevant crossing points of augmentations of (possibly concurrent) trajectory segments $T_a = [\bfa_1,  \ldots, \bfa_k]$ and $T_b = [\bfb_1,  \ldots, \bfb_l]$. By the above reasoning, every point $\bfa_i$ and $\bfb_i$ is either of the form $(\frac{v_i}{d},u)$ or $(u,\frac{w_i}{c})$ for $u\in\{0,1\}$ and integers $v_i$ and $w_i$, and furthermore because the slopes have magnitude greater than one, at least one endpoint of the line segment between $\bfb_i$ and $\bfb_{i+1}$ (and $\bfb_i$ and $\bfb_{i+1}$) is of the latter form.  This means that any crossing point $\bfc=(x,y)$ can be found by solving two equations of the form
\[y-\frac{w_1}{c}=\frac{d}{c}(x-u_1)\textup{ and }   y-\frac{w_2}{c}=-\frac{d}{c}(x-u_2),\]
from which 
\[x=\frac{du_1+du_2+w_2-w_1}{2d} \textup{ and }y=\frac{du_2-du_1+w_1+w_2}{2c}.\]
Therefore, a crossing point of the augmentation of trajectory segments can not contribute anything to $([0,1]^{2q}, \calA_{\bbP}^q)$ other than $2cd$.  

A contribution of $2$ will definitely occur when $q\geq 3$ because we can see that the augmentations of the one-point corner trajectory segments $T_a=[(0,0)]$ and $T_b=[(0,1)]$ have the crossing point $\bfc=\big(\frac{c}{2d},\frac{1}{2}\big)$.

It remains to show that a contribution of $c$ does not occur when $c>1$ and $q\leq \lceil d/c \rceil$. By symmetry we choose $T_a$ to start at $\bfa_1=(0,0)$ and consider the options for trajectory segments $T_b$ where the lengths of $T_a$ and $T_b$ sum to at most $\lceil d/c \rceil-1$.  If $T_b$ also starts at $(0,0)$, then neither augmented flow reaches $x=1$ and no crossing points exist. If $T_b$ starts at $(0,1)$, again neither augmented flow reaches $x=1$ and the only crossing points are of the form $\bfc=\big(r\frac{c}{2d},\frac{1}{2}\big)$ for odd integers $r$.    If $T_b$ starts at $(0,1)$ or $(1,1)$, the augmentations of $T_a$ does not reach far enough to the right to reach the augmentation of $T_b$. This concludes the proof.
\end{proof}

We now apply Theorem~\ref{thm:oppsignsdenominator} to the lateral nightrider with basic moves $(2,1)$ and $(2,-1)$.

\begin{corollary}
 \label{cor:lateralnightrider}
Let $\calB$ be the square board and $\bbP$ be the lateral nightrider.  Then the denominator of $(\calB^q, \calA_{\bbP}^q)$ is
\[ \begin{cases}
    1 & q = 1 \\
    2 & q = 2 \\
    4 & q \geq 3
    \end{cases}.
\]
\end{corollary}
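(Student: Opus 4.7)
The plan is to obtain this corollary as a direct specialization of Theorem~\ref{thm:oppsignsdenominator}. The lateral nightrider has basic moves $(2,1)$ and $(2,-1)$, which fits the hypothesis of Theorem~\ref{thm:oppsignsdenominator} with $c=2$ and $d=1$.

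First I would compute the auxiliary quantities appearing in the theorem: $\hat{c}=\min(|c|,|d|)=1$ and $\hat{d}=\max(|c|,|d|)=2$, so that $\lceil \hat{d}/\hat{c}\rceil = 2$ and $2\hat{c}\hat{d}=4$. Substituting these into the piecewise formula of Theorem~\ref{thm:oppsignsdenominator} gives denominator $1$ when $q=1$, denominator $\hat{d}=2$ when $q=2$, and denominator $2\hat{c}\hat{d}=4$ when $q\geq \lceil \hat{d}/\hat{c}\rceil +1 = 3$. The middle branch (with $3\leq q\leq \lceil \hat{d}/\hat{c}\rceil = 2$) is vacuous, so the final formula only has three cases, matching the statement.

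There is essentially no obstacle beyond checking that the hypothesis of Theorem~\ref{thm:oppsignsdenominator} applies: the lateral nightrider's moves indeed have the form $(c,d)$ and $(c,-d)$ (so the two slopes are $\pm 1/2$, which sum to zero), placing us squarely in Section~\ref{sec:case3}. The entire proof can therefore be written in a single short paragraph that invokes Theorem~\ref{thm:oppsignsdenominator} with $(c,d)=(2,1)$ and records the arithmetic above.
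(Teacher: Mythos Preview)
Your proposal is correct and is exactly the approach the paper takes: the corollary is stated immediately after Theorem~\ref{thm:oppsignsdenominator} as a direct specialization with $(c,d)=(2,1)$, and the paper gives no proof beyond that remark. Your computation of $\hat c$, $\hat d$, and the resulting piecewise values is accurate, including the observation that the third branch is vacuous.
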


Again, if Conjecture~\ref{conj:period} is true, this formula would be the period of the counting quasipolynomial for lateral nightriders.

\subsection{Moves that yield periodic trajectories}\label{sec:periodic}
Campbell et al \cite{chaos} investigated the rotation numbers of piecewise linear degree one circle maps.  In investigating the dynamical system in this article (which can be seen as a circle map), Khmelev \cite[p.~558]{khmelev_2005} mentions that it is a difficult question to precisely determine the measure of the set of direction pairs $(\mathbf{m}_1, \mathbf{m}_2)$ that give rational rotation numbers or, equivalently, produce a periodic trajectory or a fixed point.  We are able to conclude that the measure of the set is positive for any polygon.  We prove that this set is of full measure for the triangle and conjecture that this set is not of full measure for any polygon with more than three sides. Some of these results have been found independently by Nogueira and Troubetzkoy. See, for instance, \cite[Corollary 7]{scoopers}. 

For the remainder of this section we consider the probability measure on the space of direction pairs $M = \{(\bfm_1, \bfm_2) \mid \bfm_1, \bfm_2 \in \mathbb{S}^1\}$ that is uniform over the angle parameter and define $N \subseteq M$ be the set of all pairs of nonparallel directions $(\bfm_1, \bfm_2)$ such that $\bfm_1$ and $\bfm_2$ gives rise to a periodic trajectory in the polygon $\calB$ under consideration.  (The restriction of $N$ to nonparallel directions is due to our definition of the dynamical system on two nonparallel directions.)

\begin{theorem}
\label{thm:positive_polygons}
For any convex polygon $\calB$, the measure of $N$ is positive.
\end{theorem}
\begin{proof}
Let $\bfb$ be a corner of $\calB$. Let $\bfv_1$ and $\bfv_2$ be the directions leaving $\bfb$ along the two sides of $\calB$. Define $S\subset \bbS^1$ to be the set of directions that lie in the two (closed) convex cones that are nonnegative linear combinations of $\bfv_1$ and $-\bfv_2$ or of $-\bfv_1$ and $\bfv_2$ in $\bbS^1$.  For any two nonparallel vectors $\bfm_1$ and $\bfm_2$ from $S$, the point $\bfb$ is a fixed point of the corresponding dynamical system. Since the area of these cones is positive and the set of pairs of parallel vectors is of measure 0, the measure of $N$ is positive.
\end{proof}

\begin{proposition}
 \label{prop:triangles}
For any triangle $\calB$, the set $N$ consists of all pairs of nonparallel directions. 
\end{proposition}
\begin{proof}
Let $\calB$ be a triangle $ABC$ whose oriented edges $AB$, $BC$, and $CA$ have direction vectors $\bfv_1$, $\bfv_2$, and $\bfv_3$, respectively. 

Suppose that there exists a pair of nonparallel directions $(\bfm_1,\bfm_2)$ in $N$ that does not create a fixed point at any of $A$, $B$, or $C$. We can conclude that neither of $\bfm_1$ and $\bfm_2$ is parallel to any of $\bfv_1$, $\bfv_2$, or $\bfv_3$ by the following reasoning.  Suppose, for instance, that $\bfm_1$ is parallel to $\bfv_1$. At least one of the two lines with direction vector $\bfm_2$ passing through vertices $A$ and $B$ does not intersect the interior of $\calB$---the corresponding vertex would be a fixed point of the dynamical system.

By the argument in the proof of Theorem~\ref{thm:positive_polygons}, either $\pm\bfm_1$ or $\pm\bfm_2$ must be in each of the (open) convex cones that is the positive or negative linear combination of $\bfv_1$ and $-\bfv_2$, of $\bfv_2$ and $-\bfv_3$, and of $\bfv_3$ and $-\bfv_1$. By the pigeonhole principle, either $\pm\bfm_1$ or $\pm\bfm_2$ is in at least two of these cones.  However, because $\calB$ is a triangle, these sets are all disjoint, so no such $\bfm_1$ and $\bfm_2$ exist. 
\end{proof}

\begin{theorem} \label{thm:positive_measure}
For the square $\calB = [0, 1]^2$, the measure of $N$ is at least $3/4$.
\end{theorem}
\begin{proof}
Write $\bfm_1 = (c_1, d_1)$ and $\bfm_2 = (c_2, d_2)$. By the discussion preceding Theorem \ref{thm:orthogonaldynamics}, if $0 < \frac{d_1}{c_1} < 1$ and $\frac{d_2}{c_2} < -1$, then there is a periodic orbit in $\calB$. The set of such direction pairs is a set of measure $1/4$. Moreover, direction pairs $\bfm_1$ and $\bfm_2$ with $\frac{d_1}{c_1}$ and $\frac{d_2}{c_2}$ having the same signs give rise to fixed points as in Section~\ref{sec:case1}. The set of such direction pairs has measure $1/2$. As before, the set of pairs of parallel vectors is of measure 0, so $N$ has measure at least $3/4$.
\end{proof}

A natural question is whether $N$ has full measure. We believe the answer is no for all polygons with four or more sides.

\begin{conjecture} \label{conj:measure_polygons}
For all polygons with four or more sides, the measure of $N$ is strictly less than $1$.
\end{conjecture}

\begin{conjecture} \label{conj:measure_squares}
For the square $\calB = [0, 1]^2$, the measure of $N$ is $3/4$.
\end{conjecture}

Our intuition for these conjectures comes from behavior of the dynamical system on the square for choices of direction slopes $m_1$ and $m_2$ not covered in earlier sections of Section~\ref{sec:applications}. In particular, the case when $m_1$ and $m_2$ have opposite signs is not fully understood. 

Several types of dynamics have emerged in this case.  The simplest situation is when all trajectory segments are cyclic.  This occurs when $m_2 = -m_1$ (see Section~\ref{sec:case3}) and this also appears to occur when $m_1 = \frac{1}{3}$ and $m_2 = -\frac{2}{3}$. (See Figure~\ref{fig:AllPeriodicOpenQuestion}.)

\begin{figure}[htp]
\centering
\includegraphics[width=5cm]{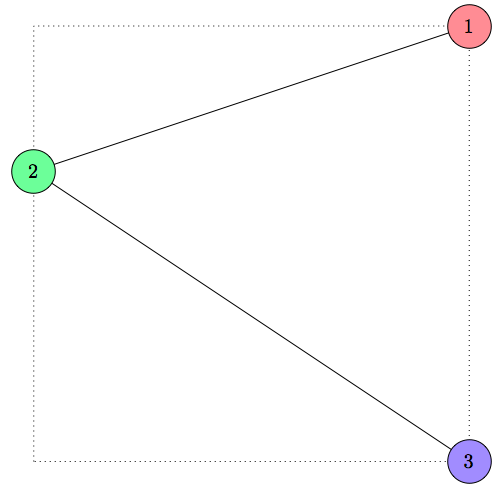}
\includegraphics[width=5cm]{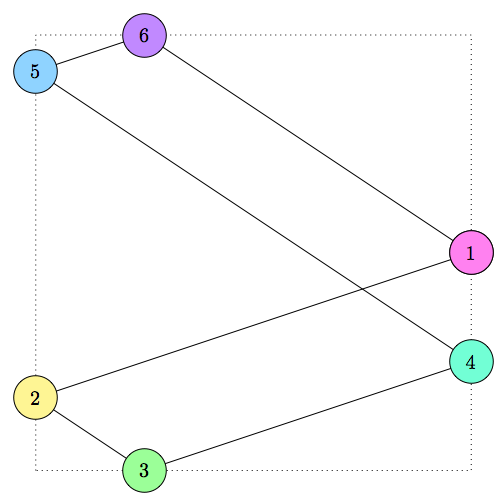}
\caption{$m_1 = \frac{1}{3}$ and $m_2 = -\frac{2}{3}$. The first trajectory segment begins at $(1, 0)$, while the second begins at $(1, \frac{1}{2})$. The other points we tested on $\partial\calB$ also have periodic orbits.}
\label{fig:AllPeriodicOpenQuestion}
\end{figure}

Convergent behavior also occurs, similar to what we saw in Figure~\ref{fig:convergingrigidcycle} from Section~\ref{sec:oppsigns} in which all trajectories converge to the same rigid cycle.  When $m_1 = \frac{3}{10}$ and $m_2 = -\frac{4}{10}$, trajectories converge to a single finite trajectory set, as shown in Figure~\ref{fig:ConvergingOpenQuestion}.

\begin{figure}[htp]
\centering
\includegraphics[width=5cm]{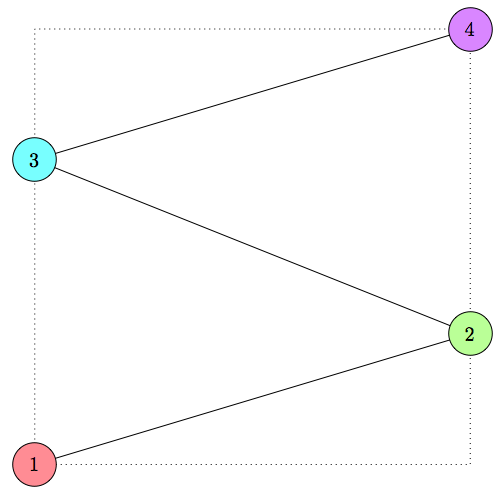}
\includegraphics[width=5cm]{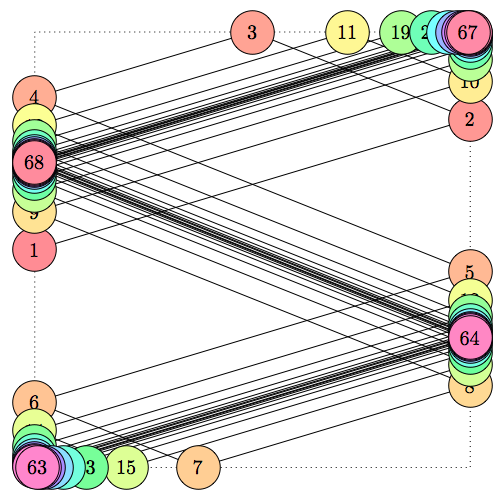}
\caption{$m_1 = \frac{3}{10}$ and $m_2 = -\frac{4}{10}$. The first trajectory begins at $(0, 0)$, and the second begins at $(0, \frac{1}{2})$. The points of the first trajectory seem to form the $\omega$-limit set of the second.}
\label{fig:ConvergingOpenQuestion}
\end{figure}

However, these two behaviors producing finite trajectory sets seem to hinge on relationships between the values of $m_1$ and $m_2$.  Much more common is an ergodic behavior.  For example, when $m_1 = \frac{1}{3}$ and $m_2 = -\frac{1}{4}$ we have the behavior shown in Figure~\ref{fig:DenseOpenQuestion}.  This orbit does not seem to converge to any limiting trajectory, unlike the convergence behavior seen when move combinations give rise to periodic orbits (as in Figures \ref{fig:convergingrigidcycle} and \ref{fig:ConvergingOpenQuestion}).

\begin{figure}[htp]
\centering
\includegraphics[width=5cm]{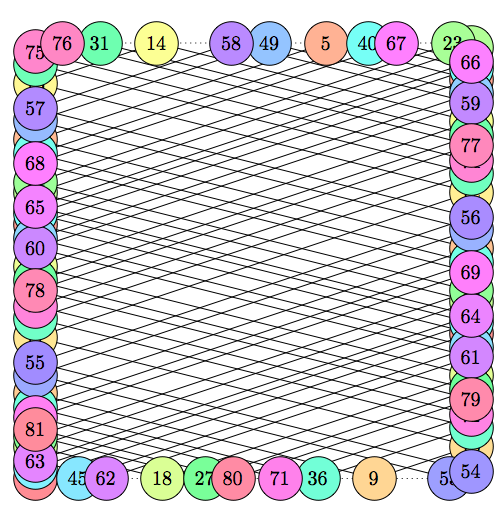}
\caption{$m_1 = \frac{1}{3}$ and $m_2 = -\frac{1}{4}$. These are the first 80 points in the trajectory set of $(0, 0)$, which appears to be dense in $\partial\calB$.
}
\label{fig:DenseOpenQuestion}
\end{figure}

Further substantiating ergodic behavior is that the trajectory set of $(0, 0)$ appears to be dense in $\partial\calB$.  We collected data about the positions of all trajectory points that lie on the western edge of the square in Figure~\ref{fig:DenseOpenQuestion} and we computed the maximum distance gap between any two trajectory points, in other words, the length of the largest segment that had no trajectory points. This gap distance decreases steadily as more and more points are computed.  See Table~\ref{tab:dense_numerical_data}.

\begin{table}[htp]
    \centering
\begin{tabular}{c|cccccccccc}
$N$ &  3 & 8 & 11 & 16 & 21 & 29 & 52 & 73 & 
96  \\ \hline 
gap &  0.583 & 0.389 & 0.259 & 0.194 & 0.162 & 0.130 & 0.0864 & 0.0576 & 0.0432 \vspace{.1in} \\ 
$N$ & 119& 179 & 308 & 435 & 564 & 693 & 1053 & 1800 & 2545  \\ \hline
gap   & 0.0360 & 0.0288 &  0.0192 & 0.0128 & 0.00960 & 0.00800 & 0.00640 & 0.00427 & 0.00285  \\
\end{tabular}
\medskip

   \caption{The largest distance gap left between any two points in the finite trajectory segment on the western edge of the square in  Figure \ref{fig:DenseOpenQuestion} as a function of the number of points plotted. The given $N$ is the first number of points for which the gap decreased to this value.}
    \label{tab:dense_numerical_data}
\end{table}

If the points visited by the trajectory are indeed dense in the boundary, Theorem~\ref{thm:treacheryinterval} may be useful to prove Conjecture~\ref{conj:measure_squares}---no treachery could exist because there would be no open intervals available around its boundary points.

\section{Open Questions}\label{sec:open}

The variables that determine the behavior of a particle's flow in mathematical billiards are the shape of the region as well as the initial position and initial direction of the particle.  In the dynamical system studied in this article, the key variables are the shape of the board, the slopes of the moves, and the initial position of the particle.  The similarity of the behaviors of the flows in the two dynamical systems leads to many open questions.  Progress on Questions~7.1, 7.2, 7.3, 7.4, and 7.13 has appeared in Nogueira and Troubetzkoy's \cite{scoopers}.

\subsection{Fruitful regions and moves}
In the study of convex billiards, circles, ellipses, and curves of constant width have produced beautiful results.  So have rational polygons, where internal angles are rational multiples of~$\pi$ \cite{tabachnikov}.  This leads us to ask which choices of board and moves lead to fruitful directions in the dynamical system of this article.

\begin{question}
What properties of polygonal or general convex boards are more likely to lead to provable results, in terms of dynamical properties or Ehrhart theory, for some choices of moves?
\end{question}  

\begin{question}
What restrictions on moves are more likely to lead to provable results, in terms of dynamical properties or Ehrhart theory, on a wide variety of boards?
\end{question}

\subsection{Properties of trajectories}\label{sec:properties}

In convex billiards, a classic unsolved question is whether every polygon has a periodic orbit, which has applications to the physics of point masses \cite{gutkin2}. It is known that every rational polygon and every acute triangle has a periodic orbit.  For square regions, it is further known that a billiard trajectory is periodic if the slope of the particle's initial direction is rational, and ergodic otherwise.  We ask similar questions about the dynamical system in this article. 

\begin{question}\label{q:periodic}
Given a polygonal board $\calB$ (or an arbitrary convex board $\calB$), what conditions on the slopes $m_1$ and $m_2$ will ensure that there is a periodic orbit in $\calB$?
\end{question}

\begin{question}\label{q:ergodic}
For which choice of board $\calB$, slopes $m_1$ and $m_2$, and initial point $\bfb$ is the trajectory through $\bfb$ ergodic?
\end{question}

To apply Theorems~\ref{thm:vertices} and \ref{thm:denominator}, we must understand the periodic orbits and also be able to determine the rank of their corresponding cyclic trajectory segments.  This leads to the following refinement of the Question~\ref{q:periodic}.

\begin{question}\label{q:rigid}
For which choice of board $\calB$ and slopes $m_1$ and $m_2$ does there exist a rigid cycle?  And under which conditions is there a unique rigid cycle?
\end{question}

The variety of behaviors for pieces with slopes of opposite signs in Section~\ref{sec:periodic} leads us to ask for a classification for these behaviors on the square board.

\begin{question}\label{q:square}
Classify the behavior of trajectories on the square board for every choice of pieces with moves along slopes $m_1$ and $m_2$.  Under what conditions will there be a periodic orbit and what is it?  Under what conditions will the behavior of the system be ergodic?  
\end{question}

We remark that in Sections~\ref{sec:case1} and \ref{sec:oppsigns} the dynamics do not depend on the rationality of $m_1$ and $m_2$, but in Section~\ref{sec:case3} they do.  We are not sure why this is the case.

\begin{question}
Which results hold for irrational slopes in addition to rational slopes?
\end{question}

\subsection{Generalizations of the dynamical system} 
There are many ways that the discrete dynamical system for billiards generalizes; we wonder if this other model can also be generalized further.  First, we ask if it is possible to generalize the board $\calB$ to regions that are fruitful in the study of billiards.

\begin{question}
Can the dynamical system in this article be generalized to non-convex regions? To hyperbolic models?  To a system similar to outer billiards?
\end{question}

We also wonder if we can remove the restriction that there are only two moves. 

\begin{question}\label{q:moremoves}
Is there a way to make sense of such a dynamical system involving more than two moves? 
\end{question}

Could studying such a dynamical system be useful in the study of three-move riders, or riders with more moves?  One possible way to allow for more moves is to require that the moves be applied in a cyclical fashion.  When there are only two moves, the trajectory must always lie in the plane spanned by those two vectors. If one is able to find a way to involve more than two moves, the dynamical system may be able to generalize to higher dimensions.

\begin{question}
Is there a higher-dimensional analog of this dynamical system, similar to billiards in a polytope?
\end{question}

\subsection{Dynamical System Theory}

Inspired by dynamical systems theory we can ask about the stability of the dynamical system by perturbing the board, perturbing the set of moves, and perturbing the particle's initial position.

\begin{question}
How does a slight perturbation of the board impact the behavior of the trajectories?  Of the existence or uniqueness of the rigid cycles?  How do the changes depend on the piece's moves?
\end{question}

\begin{question}
How does a slight perturbation of the piece's move vectors impact the behavior of the trajectories?  Of the existence or uniqueness of the rigid cycles?  How do the changes depend on the board?
\end{question}

\begin{question}\label{q:close}
Do two trajectories that start from sufficiently close points $\bfb$ and $\bfb'$ have the same behavior?  If $\bfb$ is periodic, must $\bfb'$ be periodic?  Must they have the same period?
\end{question}

Question~\ref{q:close} was partially answered in Theorem~\ref{thm:treacheryinterval} when the trajectory is a treachery.  In general, if the answer to Question~\ref{q:close} is positive for a specific board and set of moves, that would prove that the corresponding cyclic trajectory segments are not rigid cycles, similar to the argument given in Proposition~\ref{prop:oppsignsrigidcycles}.

Crossing points of trajectories are central to the study of the dynamical system, but there does not appear to be much focus on them in the discrete dynamical system literature. Perhaps such a question can inspire new directions of research in existing dynamical systems.

\begin{question}
What are the coordinates of crossing points of trajectories in existing discrete dynamical systems, including billiards?  For which discrete dynamical systems are the formulas of the coordinates of these crossing points easy to calculate?  Do the denominators of these coordinates behave predictably?
\end{question}

\subsection{Periods and Denominators}
An important question in Ehrhart Theory is the relationship between the period of an Ehrhart quasipolynomial and the denominator of its corresponding polytope (or inside-out polytope). 

We have found the denominator of $(\calB^q, \calA_{\bbP}^q)$ for several classes of two-move riders $\bbP$ when $\calB$ is the square board. This gives us provable bounds on the period of the Ehrhart quasipolynomial of $(\calB^q, \calA_{\bbP}^q)$, and we can use this to explicitly compute $u_{\bbP}(q; n)$ through brute force. This may give insight on the period of $u_{\bbP}(q; n)$. 

\begin{question}
Is the period always equal to the denominator of $(\calB^q, \calA_{\bbP}^q)$ when $\bbP$ is a two-move rider?
\end{question}


\section*{Acknowledgments}

We would like to thank Thomas Zaslavsky and Dan Lee for fruitful discussions.  We are grateful to multiple anonymous referees who have pointed us to the origin of the dynamical system, other works where it has appeared, and guidance for greatly improving our dynamical systems exposition.  The first author is grateful for the support of PSC-CUNY Award 61049-0049. 

\bibliographystyle{plain}

\end{document}